\newtheorem {theorem}{Theorem}
\newtheorem {definition}{Definition}
\newtheorem {prop}{Proposition}
\newtheorem {corollary}{Corollary}
\begin{document}
\title{Spectral properties of bipolar surfaces\\ to Otsuki tori}
\author{Mikhail A. Karpukhin}
\date{} 
\maketitle
\textsc{{Department of Geometry and Topology, Faculty of Mechanics and Mathematics, Moscow State University, Leninskie Gory, GSP-1, 
119991, Moscow, Russia}}

\smallskip

\textit{{and}}

\smallskip

\textsc{{Independent University of Moscow, Bolshoy Vlasyevskiy pereulok 11, 119002, Moscow, Russia}}

\smallskip

\textit{E-mail address:} \texttt{karpukhin@mccme.ru}
\begin{abstract}
The $i$-th eigenvalue $\lambda_i$ of the Laplace-Beltrami operator on a surface can be considered as a functional on 
the space of all Riemannian metrics of unit volume on this surface. Surprisingly only few examples of extremal metrics for these functionals are known. In the present paper a new countable family of extremal metrics on the torus is provided.\\
\textit{2010 Mathematics Subject Classification.} 58E11, 58J50.\\
\textit{Key words and phrases.} Otsuki tori, extremal metric, bipolar surface.
\end{abstract} 
\section*{Introduction.}
Let $M$ be a closed surface and $g$ be a Riemannian metric on $M$. Let us consider the associated Laplace-Beltrami operator
$\Delta$ acting on the space of smooth functions on~$M$,
$$
\Delta f = -\frac{1}{\sqrt{|g|}}\frac{\partial}{\partial x^i}\bigl(\sqrt{|g|}g^{ij}\frac{\partial f}{\partial x^j}\bigr).
$$
It is well-known that the spectrum of $\Delta$ is non-negative and consists only of eigenvalues, each eigenvalue has finite
 multiplicity and the eigenfunctions are smooth. Let us denote the eigenvalues of $\Delta$ by 
$$
0 = \lambda_0(M,g) < \lambda_1(M,g) \leqslant \lambda_2(M,g) \leqslant \lambda_3(M,g) \leqslant \ldots,
$$
where eigenvalues are written with multiplicities.

The eigenvalues possess the following property,
$$
\forall t>0\qquad \lambda_i(M,tg) = \frac{\lambda_i(M,g)}{t}, 
$$
Therefore, given a fixed surface $M$ one has $\sup\lambda_i(M,g) = +\infty$, where supremum is taken over the space of 
all Riemannian metrics on $M$. But if we consider supremum over the space of all Riemannian metrics on $M$ of unit area then
the question about the value of $\sup\lambda_i(M,g)$ becomes more interesting. In fact, in the case $\dim M = 2$ we can 
consider functionals
$$
\Lambda_i(M,g) = \lambda_i(M,g)\mathrm{Area}(M,g)
$$
unvariant under the transformation $g\mapsto tg$ and investigate their supremum over the space of all Riemannian metrics.

It is known that functionals $\Lambda_i(M,g)$ are bounded from above. Yang and Yau proved in the paper~\cite{YangYao} that
for an orientable surface $M$ of genus $\gamma$ the following inequality holds,
$$
\Lambda_1(M,g) \leqslant 8\pi(\gamma + 1).
$$ 
Moreover, Korevaar proved in the paper~\cite{Korevaar} 
that there exists a constant $C$ such that for any $i>0$ and any compact surface $M$ of genus $\gamma$ the following 
inequality holds, 
$$
\Lambda_i(M,g) \leqslant C(\gamma+1)i.
$$
However, Colbois and Dodziuk proved in the paper~\cite{ColboisDodziuk} that for any manifold $M$ of dimension $\dim M\geqslant 3$
the functional $\lambda_i(M,g)$ is not bounded on the space of Riemannian metrics $g$ on $M$ of unit volume.

The functional $\Lambda_i(M,g)$ depends continously  on the metric $g$, but this functional is not differentiable. 
However, Berger proved in the paper~\cite{Berger} that for an analytic family of metrics $g_t$ there exist the left and right 
derivatives
with respect to $t$. This is a motivation for the following definition, see the papers~\cite{ElSoufiIlias1,Nadirashvilli}.

\begin{definition} A Riemannian metric $g$ on a closed surface $M$ is called an extremal metric for the functional 
$\Lambda_i(M,g)$ if for any analytic deformation $g_t$ such that $g_0 = g$ the following inequality holds,
$$
\frac{d}{dt}\Lambda_i(M,g_t)\Bigl|_{t=0+} \leqslant 0 \leqslant \frac{d}{dt}\Lambda_i(M,g_t)\Bigl|_{t=0-}.
$$
\end{definition}

The detailed list of surfaces $M$ and values of index $i$ such that maximal or at least extremal metrics are known is quite
short and can be found in the introduction to the paper~\cite{PenskoiOtsuki}.

It turns out that extremal metrics are closely related to minimal submanifolds of the 
spheres. Let $M \looparrowright \mathbb{S}^n$ be a minimally
immersed submanifold of the unit sphere $\mathbb{S}^n \subset \mathbb{R}^{n+1}$. We denote by 
$\Delta$ the Laplace-Beltrami operator on $M$ associated with the induced metric $g$ on $M$. 
Let us introduce the eigenvalues counting function
$$
N(\lambda) = \#\{i|\lambda_i(M,g)<\lambda\}.
$$
This function is often called the Weyl's function. The following theorem provides a general approach to finding smooth extremal metrics.
\begin{theorem}[El Soufi and Ilias,~\cite{ElSoufiIlias2}] Let $M \looparrowright 
\mathbb{S}^n$ be a minimally
immersed submanifold of the unit sphere $\mathbb{S}^n \subset \mathbb{R}^{n+1}$. Then the metric induced on $M$ by the
 immersion is extremal for the functional $\Lambda_{N(2)}(M,g)$
\label{th1}
\end{theorem}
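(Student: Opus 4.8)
The plan is to exploit the well-known characterization of minimal immersions into spheres via eigenfunctions of the Laplacian, together with the variational formula for $\Lambda_i$ along analytic deformations. The key classical fact, due to Takahashi, is that an immersion $\phi\colon M\looparrowright\mathbb S^n\subset\mathbb R^{n+1}$ is minimal precisely when the coordinate functions $\phi^1,\dots,\phi^{n+1}$, restricted to $M$, satisfy $\Delta\phi^k=2\phi^k$ with respect to the induced metric $g$ (the eigenvalue $2$ being exactly $\dim M=2$ in our surface case). Hence $2$ is an eigenvalue of $\Delta$ on $(M,g)$, and the coordinate functions lie in its eigenspace. By definition of the Weyl counting function, $N(2)=\#\{i\mid\lambda_i<2\}$, so $\lambda_{N(2)}(M,g)=2$: the index $N(2)$ is precisely the first index at which the eigenvalue $2$ occurs.

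First I would recall the first-variation formula for eigenvalues along an analytic family $g_t$ with $g_0=g$. Writing $h=\frac{d}{dt}g_t\big|_{t=0}$ for the variation tensor, Berger's result guarantees one-sided derivatives, and for an eigenvalue $\lambda=\lambda_i(M,g)$ of multiplicity $m$ one has
$$
\frac{d}{dt}\lambda_i(M,g_t)\Big|_{t=0\pm}\ \in\ \Big\{-\langle q_h,\,u\otimes u\rangle_{L^2}\ :\ u\in E_\lambda,\ \|u\|_{L^2}=1\Big\},
$$
where $q_h$ is a quadratic form on the eigenspace $E_\lambda$ built from $h$ (explicitly involving $\mathrm{tr}_g h$ and the stress-energy tensor $du\otimes du$ of eigenfunctions), the right derivative picking out the smallest such value and the left derivative the largest. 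Combining this with the scaling relation $\Lambda_i=\lambda_i\cdot\mathrm{Area}$ and the first variation of area, $\frac{d}{dt}\mathrm{Area}(M,g_t)|_{t=0}=\frac12\int_M\mathrm{tr}_g h\,dv_g$, produces a single quadratic functional $Q_h$ on $E_{\lambda}$ whose extremal values over the unit sphere of $E_\lambda$ are exactly the one-sided derivatives of $\Lambda_i$.

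The heart of the argument is then to show that when $\lambda=2$ and $M$ is minimally immersed, this quadratic form $Q_h$ has mixed sign (or vanishes) on $E_2$ for every $h$, which forces
$$
\frac{d}{dt}\Lambda_{N(2)}(M,g_t)\Big|_{t=0+}\leqslant 0\leqslant\frac{d}{dt}\Lambda_{N(2)}(M,g_t)\Big|_{t=0-}.
$$
This is where minimality enters decisively: because the coordinate functions $\phi^1,\dots,\phi^{n+1}$ all lie in $E_2$ and satisfy $\sum_k(\phi^k)^2=1$ and $\sum_k|d\phi^k|^2=\mathrm{tr}_g g=2$ (as $\phi$ is an isometric immersion into the unit sphere), summing the quadratic form $Q_h$ evaluated on this distinguished family of eigenfunctions yields a universal identity in which the $h$-dependent terms cancel — concretely $\sum_k Q_h(\phi^k)=0$ for all $h$. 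An averaging/positivity argument (if $Q_h$ were, say, strictly positive definite on $E_2$ its trace over any spanning family would be positive, contradicting the vanishing sum) shows $Q_h$ cannot be sign-definite, giving the two-sided inequality. I expect this cancellation identity, and the careful bookkeeping that the relevant quadratic form is genuinely the one controlling both one-sided derivatives simultaneously (so that no gap can open between them), to be the main obstacle; the Takahashi characterization and the scaling computation are routine by comparison.
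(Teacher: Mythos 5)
This theorem is quoted by the paper from El Soufi--Ilias without proof, so there is no in-paper argument to compare against; judged on its own, your outline faithfully reproduces the original argument: Takahashi's theorem places the coordinate functions in the eigenspace $E_2$ and identifies $N(2)$ as the first index of the eigenvalue $2$, the one-sided derivatives of $\Lambda_{N(2)}$ are the extreme values of a quadratic form $Q_h$ on $E_2$, and the identity $\sum_k Q_h(\phi^k)=0$ rules out sign-definiteness of $Q_h$ for every $h$. The one point worth making explicit when you flesh this out is that the cancellation for \emph{arbitrary} $h$ requires the full tensorial identity $\sum_k d\phi^k\otimes d\phi^k=g$ (isometry of the immersion), not merely its trace $\sum_k|d\phi^k|^2=2$, and that the argument survives even if some $\phi^k$ vanish identically or fail to span $E_2$, since definiteness of $Q_h$ need only be tested on the nonzero members of this family.
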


We also need to recall another result concerning minimal submanifolds of the sphere. This theorem can be found e.g. 
in the book~\cite{Kobayasi}.
\begin{theorem} Let $M \looparrowright \mathbb{S}^n$ be a minimally immersed submanifold of the unit sphere 
$\mathbb{S}^n \subset \mathbb{R}^{n+1}$. Then the restrictions $x^1|_M,\ldots,x^{n+1}|_M$ on $M$ of the standart coordinate
functions of $\mathbb{R}^{n+1}$ are eigenfunctions of the Laplace-Beltrami operator on $M$ with eigenvalue $\dim M$.
\label{th2}
\end{theorem}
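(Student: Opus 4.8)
The plan is to prove the slightly stronger pointwise identity $\Delta\, x^\alpha|_M = (\dim M)\, x^\alpha|_M$ for each $\alpha=1,\dots,n+1$ at once, by first establishing the Beltrami-type formula relating $\Delta$ applied to the restricted coordinate functions to the mean curvature vector of $M$ in $\mathbb{R}^{n+1}$, and then computing that mean curvature vector using minimality of $M$ inside the round sphere.

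First I would fix $p\in M$, set $m=\dim M$, and choose a local orthonormal frame $e_1,\dots,e_m$ of $TM$ near $p$ that is geodesic at $p$, i.e. $\nabla_{e_i}e_j(p)=0$ for the Levi-Civita connection $\nabla$ of the induced metric $g$. Write $\bar\nabla$ for the flat connection of $\mathbb{R}^{n+1}$ and regard the immersion $x$ as an $\mathbb{R}^{n+1}$-valued function on $M$, so that $\bar\nabla_{e_i}x=e_i$. For $f=x^\alpha|_M$ one then has $e_i(f)=\langle e_i,\partial_\alpha\rangle$, whence $\mathrm{Hess}_g f(e_i,e_i)(p)=e_i(e_i f)(p)=\langle \bar\nabla_{e_i}e_i,\partial_\alpha\rangle(p)$, using that $\partial_\alpha$ is a constant vector of $\mathbb{R}^{n+1}$ and that $\nabla_{e_i}e_i(p)=0$. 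Summing over $i$ and using the paper's sign convention $\Delta f=-\operatorname{tr}_g\mathrm{Hess}_g f$, together with the orthogonal splitting $\bar\nabla_{e_i}e_i=\nabla_{e_i}e_i+\mathrm{II}(e_i,e_i)$ into its tangential part and the second fundamental form, I obtain at $p$ (hence everywhere, $p$ being arbitrary) the vector identity $\Delta x|_M=-\sum_i\mathrm{II}(e_i,e_i)=-m\vec H$, where $\vec H$ is the mean curvature vector of $M\looparrowright\mathbb{R}^{n+1}$ and $\mathrm{II}$ its second fundamental form.

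Next I would compute $\vec H$ along the tower $M\looparrowright\mathbb{S}^n\looparrowright\mathbb{R}^{n+1}$. For $X,Y$ tangent to $M$ one has $\mathrm{II}^{\mathbb{R}^{n+1}}_M(X,Y)=\mathrm{II}^{\mathbb{S}^n}_M(X,Y)+\mathrm{II}^{\mathbb{R}^{n+1}}_{\mathbb{S}^n}(X,Y)$, and since the outward unit normal of $\mathbb{S}^n$ in $\mathbb{R}^{n+1}$ is the position vector $x$ itself, a direct check gives $\mathrm{II}^{\mathbb{R}^{n+1}}_{\mathbb{S}^n}(X,Y)=-\langle X,Y\rangle\,x$. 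Tracing over $e_1,\dots,e_m$ and using that $M$ is minimal in $\mathbb{S}^n$, so $\sum_i\mathrm{II}^{\mathbb{S}^n}_M(e_i,e_i)=0$, yields $m\vec H=-\sum_i\langle e_i,e_i\rangle\,x=-m x$, i.e. $\vec H=-x$. Combining with the previous step gives $\Delta x|_M=-m\vec H=m\,x|_M$, which is the claimed eigenvalue equation componentwise. I would close by noting that these eigenfunctions do not all vanish — indeed $\sum_\alpha (x^\alpha|_M)^2\equiv 1$ because $M\subset\mathbb{S}^n$ — so $m=\dim M$ genuinely belongs to the spectrum of $\Delta$.

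There is no substantial analytic difficulty here; the computation is entirely pointwise Riemannian geometry, and smoothness and discreteness of the spectrum were already recalled in the introduction. The only steps requiring care are the sign bookkeeping — matching the positive Laplacian convention of the paper, getting the sign of $\mathrm{II}^{\mathbb{R}^{n+1}}_{\mathbb{S}^n}$ correct, and the additivity of the second fundamental form along the tower $M\subset\mathbb{S}^n\subset\mathbb{R}^{n+1}$ — and the standard identification of the normal part of $\bar\nabla_{e_i}e_i$ with $\mathrm{II}(e_i,e_i)$; this is the part I would write out most carefully.
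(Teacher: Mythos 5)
Your proof is correct: the Gauss-formula computation $\Delta x|_M=-m\vec H$ combined with $\mathrm{II}^{\mathbb{R}^{n+1}}_{\mathbb{S}^n}(X,Y)=-\langle X,Y\rangle\,x$ and minimality in $\mathbb{S}^n$ is exactly the standard (Takahashi-type) argument, with the signs matching the paper's positive Laplacian convention. The paper itself offers no proof of this statement, merely citing Kobayashi--Nomizu, and the argument you give is the one found there, so there is nothing to object to.
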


Thus, it is possible to take an immersed minimal surface $M$ in the sphere, then compute $N(2)$ and deduce that the metric 
induced on $M$ by the immersion is extremal for $\Lambda_{N(2)}(M,g)$. This approach was successfully realized for the first 
time by Penskoi in the papers~\cite{PenskoiOtsuki,PenskoiLawson} for Otsuki tori and Lawson tau-surfaces.
Although, we should mention that Lapointe in the paper~\cite{Lapointe} used some of these ideas in investigation of bipolar 
surfaces to Lawson tau-surfaces. The work of Lapointe was inspired by the paper~\cite{JNP} where Jakobson, Nadirashvili and Polterovich 
proved that the metric on the Lawson bipolar surface $\tilde\tau_{3,1}$ is extremal for the functional $\Lambda_1(\mathbb{K}l,g)$.
Later, El Soufi, Giacomini and Jazar proved in the paper~\cite{EGJ} that this metric is the unique extremal metric.

In the present paper the extremality of the bipolar surfaces
to Otsuki tori is investigated. The definition of Otsuki tori and bipolar surfaces are given in Sections~\ref{OtsukiDef} and~\ref{BipolarDef} 
respectively. At this point it is sufficient to know that for every rational number $\dfrac{p}{q}$ 
such that $(p,q) = 1,\,\dfrac{1}{2}<\dfrac{p}{q}<\dfrac{\sqrt 2}{2},$ there exists a minimal immersed surface in 
$\mathbb{S}^4$ denoted by $\tilde O_{\frac{p}{q}}$. The main result of this paper is the following theorem.
\begin{theorem} The bipolar surface $\tilde O_{\frac{p}{q}}$ to an Otsuki torus is a torus. If $q$ is odd then the metric on 
$\tilde O_{\frac{p}{q}}$ induced by the immersion is extremal for $\Lambda_{2q+4p-2}(\mathbb{T}^2, g)$. If $q$ is even then 
the metric induced by the immersion on $\tilde O_{\frac{p}{q}}$ is extremal for $\Lambda_{q+2p-2}(\mathbb{T}^2,g)$.
\label{MainTheorem}
\end{theorem}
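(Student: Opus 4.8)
\emph{Sketch of the approach.} The plan is to derive the theorem from Theorem~\ref{th1}. Recall that the bipolar construction assigns to a minimal surface $x\colon M\looparrowright\mathbb{S}^3$, with Gauss map $\nu$, the surface $\tilde x=x\wedge\nu\colon M\to\mathbb{S}^5\subset\Lambda^2\mathbb{R}^4$, which by a classical theorem of Lawson is again minimal; as explained in Section~\ref{BipolarDef}, the symmetry of the Otsuki construction in fact confines $\tilde x$ to a totally geodesic $\mathbb{S}^4\subset\mathbb{S}^5$. Hence $\tilde O_{\frac{p}{q}}$ is a minimal surface in $\mathbb{S}^4$, Theorem~\ref{th1} applies, and the induced metric is automatically extremal for $\Lambda_{N(2)}(\tilde O_{\frac{p}{q}},g)$. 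Everything thus reduces to two independent tasks: showing that $\tilde O_{\frac{p}{q}}$ is diffeomorphic to $\mathbb{T}^2$, and computing $N(2)$, checking that it equals $2q+4p-2$ when $q$ is odd and $q+2p-2$ when $q$ is even.

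For the topology I would use the explicit description of $O_{\frac{p}{q}}$ from Section~\ref{OtsukiDef}: it is swept out by a periodic profile function $h(t)$ solving the Otsuki ODE together with a free angular variable $\psi$, the rationality of $\frac{p}{q}$ being exactly the closing-up condition after $q$ periods of $h$. Writing $\tilde x=x\wedge\nu$ in the coordinates $(t,\psi)$, one computes the induced metric (of warped-product type in $t$ and $\psi$) and reads off the period lattice; the quotient of $\mathbb{R}^2$ by this lattice is a torus. The parity of $q$ enters here: the half-period translation of the Otsuki profile $h$ acts on $\tilde x$ as a nontrivial element of the period lattice in one parity and trivially in the other, so the fundamental domain of $\tilde O_{\frac{p}{q}}$ — and hence the length of the interval in the Sturm--Liouville problems below — differs by a factor of two between the two cases.

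For the spectrum I would exploit the rotational symmetry and separate variables, seeking eigenfunctions of $\Delta$ in the form $y(t)\cos m\psi$ and $y(t)\sin m\psi$, $m\in\mathbb{Z}_{\geqslant 0}$. After a choice of coordinates adapted to the period lattice this reduces $\Delta f=\lambda f$ to a one-parameter family of Sturm--Liouville problems $L_m y=\lambda y$ on a circle. By Theorem~\ref{th2} the restrictions to $\tilde O_{\frac{p}{q}}$ of the standard coordinate functions of the ambient Euclidean space are eigenfunctions with eigenvalue $\dim\tilde O_{\frac{p}{q}}=2$; decomposing them into the separated form pins down which values of $m$ realise the eigenvalue $2$ and where it sits in the spectrum of the corresponding $L_m$. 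It then remains, for each $m$, to count the eigenvalues of $L_m$ strictly below $2$ and to sum these counts — with the correct $\cos/\sin$ multiplicities and with the $q$-parity identification of the period lattice — to obtain $N(2)$.

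The heart of the argument, and the main obstacle, is this last count. Following Penskoi's analysis of the Otsuki tori themselves in~\cite{PenskoiOtsuki}, I would change the independent variable according to the Otsuki ODE so that the equation $L_m y=2y$ becomes explicitly solvable, with trigonometric solutions in the new variable; oscillation and comparison theory then give, for every $m$, the exact number of eigenvalues of $L_m$ below the threshold $2$, the most delicate case being the finitely many $m$ near the critical value at which the threshold eigenfunction lives. Summing over $m$, separately in the two parity cases, yields $2q+4p-2$ and $q+2p-2$; by Theorem~\ref{th1} this completes the proof.
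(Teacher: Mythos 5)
Your overall architecture matches the paper's: apply Theorem~\ref{th1}, exhibit the bipolar surface as an $SO(2)$-invariant torus with coordinates (angle, arc-length along a reduced geodesic), separate variables into a family of periodic Sturm--Liouville problems, use Theorem~\ref{th2} to locate explicit eigenfunctions with eigenvalue $2$, and count. The monotonicity of $\lambda_0(l)$ in $l$ disposes of all $l\geqslant 2$ by a one-line Rayleigh quotient bound, and for $l=1$ the two coordinate functions $\cos\varphi\sin\theta$, $\cos\varphi\cos\theta$ span a two-dimensional eigenspace with $2p$ zeros each, which forces $\lambda_{2p-1}(1)=\lambda_{2p}(1)=2$ and hence exactly $2p-1$ eigenvalues below $2$; all of this is in your sketch, at least implicitly.

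The genuine gap is in the $l=0$ count, and it is precisely the step the paper calls its most difficult. The function $\sin\varphi(t)$ is an eigenfunction of the $l=0$ problem with eigenvalue $2$ and exactly $2q$ zeros, but in a periodic Sturm--Liouville problem the eigenfunctions $h_{2q-1}$ and $h_{2q}$ have the \emph{same} number of zeros ($2q$), so oscillation and comparison theory --- the only tools you invoke for this step --- cannot decide whether $2=\lambda_{2q-1}(0)$ or $2=\lambda_{2q}(0)$. The two alternatives give $2q-1$ versus $2q$ eigenvalues below the threshold, i.e.\ an off-by-one error in $N(2)$ that changes the index in the theorem. Making the equation at $\lambda=2$ explicitly solvable does not help either: it only reproduces the known eigenfunction and tells you the eigenvalue $2$ is simple for $l=0$, which is consistent with both alternatives. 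The paper resolves this by a separate analytic argument: it observes that the coefficient $\cos^2\varphi(t)$ is $\frac{t_0}{2q}$-periodic, so by Proposition~\ref{period} the number $\lambda_{2q-1}(0)$ equals the \emph{first antiperiodic} eigenvalue on the interval $\left[0,\frac{t_0}{2q}\right)$; it then bounds that eigenvalue strictly below $2$ by evaluating the Rayleigh quotient on the test function $\sin\frac{2q\pi t}{t_0}$, which after the change of variables $t\mapsto\varphi$ reduces to the inequality $\pi^2 I_1(b)/I_2^3(b)<2$ for an explicit ratio of complete elliptic integrals, proved via the monotonicity of that ratio in $k$. Some argument of this kind (a quantitative upper bound on $\lambda_{2q-1}(0)$, not a zero count) is indispensable, and your proposal does not supply one; as written, the proof cannot be completed.
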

The paper is organized in the following way. The Otsuki tori and their bipolar surfaces are defined
in Sections~\ref{OtsukiDef} and~\ref{BipolarDef}. A convenient parametrization of bipolar surfaces is given in Section~\ref{BipolarParametrization}.
Section~\ref{proof} contains the proof of the main theorem.
\section{Bipolar surfaces to Otsuki tori.}
\label{BipSection}
\subsection{Reduction theorem for minimal submanifolds.} Let $M$ be a 
Riemannian manifold equipped with a metric $g'$ and $I(M)$ be its full
isometry group. Let $G\subset I(M)$ be a compact isometry group. Let us denote by $\pi$ 
the natural projection 
$\pi\colon M \to M/G$.

Denote by $M^*$ the union of all orbits of principal type, then $M^*$ is an open dense submanifold of $M$. The subset 
$M^*/G$ of $M/G$ is equipped with a natural Riemannian metric $g$ defined by the formula $g(X,Y) = g'(X',Y')$, where $X,Y$ are
tangent vectors at $x\in M^*/G$ and $X',Y'$ are tangent vectors at a point $x'\in\pi^{-1}(x)\subset M^*$ such that $X'$
and $Y'$ are orthogonal to the orbit $\pi^{-1}(x)$ and $d\pi(X')=X,\,d\pi(Y')=Y$.

Let $f\colon N \looparrowright M$ be a $G$-invariant immersed submanifold, i.e. a manifold equipped with 
an action of $G$ by isometries such that $g\cdot f(x) = f(g\cdot x)$ for any $x\in N$.
\begin{definition} A cohomogeneity of a $G$-invariant immersed submanifold $N$ is the number $\dim N - 
\nu$, where $\nu$ is the dimension of the orbits of principal 
type.
\end{definition}
Let us define for $x\in M^*/G$ a volume function $V(x)$ by the formula $V(x) = \mathrm{Vol}(\pi^{-1}(x))$. Also for each integer
$k \geqslant 1$ let us define a metric $g_k = V^{\frac{2}{k}}g$.
\begin{prop}[Hsiang, Lawson~\cite{HsiangLawson}] Let $f\colon N\looparrowright M^*$ be a $G$-invariant immersed submanifold of cohomogeneity $k$, and
let $M^*/G$ be equipped with the metric $g_k$. Then $f\colon N\looparrowright M^*$ is minimal if and only if 
$\bar f\colon N/G\looparrowright M^*/G$ is minimal.
\label{HsiangTh}    
\end{prop}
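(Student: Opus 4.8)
The plan is to reduce the claim to the principle of symmetric criticality by establishing an exact identity between the Riemannian volume of a $G$-invariant submanifold $N\subset M^*$ and the $g_k$-volume of its quotient $N/G$. Since $G$ acts by isometries and $N\subset M^*$ meets only principal orbits, every orbit through a point of $N$ is $\nu$-dimensional and, by $G$-invariance, lies entirely in $N$; thus $N$ fibers over $N/G$ with these orbits as fibers. At each point $TN$ splits orthogonally into the orbit-tangent (vertical) subspace and its orthogonal complement inside $TN$. Because the metric on $N$ is induced from $g'$, this horizontal subspace coincides with the intersection of $TN$ with the horizontal distribution of $\pi$, and $d\pi$ carries it isometrically onto $(T(N/G),\bar f^*g)$ — this is precisely the content of the definition of $g$ on $M^*/G$.

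With this splitting the induced volume form on $N$ is block-diagonal, and integrating along the fibers gives
$$
\mathrm{Vol}(N, f^*g') = \int_{N/G} V(\bar x)\, d\mathrm{vol}_{\bar f^* g}(\bar x),
$$
since the fiber over $\bar x$ is exactly the orbit $\pi^{-1}(\bar x)$ of volume $V(\bar x)$. Now I use that $N/G$ has dimension $k$ and that $\bar f^* g_k = V^{\frac{2}{k}}\,\bar f^* g$, so its volume form is $(V^{\frac{2}{k}})^{k/2}\,d\mathrm{vol}_{\bar f^* g} = V\, d\mathrm{vol}_{\bar f^* g}$. Hence the right-hand side equals $\mathrm{Vol}(N/G, \bar f^* g_k)$, and I obtain the exact identity $\mathrm{Vol}(N, f^*g') = \mathrm{Vol}(N/G, \bar f^* g_k)$. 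The choice of the exponent $\frac{2}{k}$ in the definition of $g_k$ is exactly what makes the orbit-volume density $V$ absorb into the quotient volume form.

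The key point is that this identity is natural in $N$: a $G$-invariant variation $N_t$ of $N$ in $M^*$ projects to a variation $N_t/G$ of $N/G$ in $M^*/G$, and applying the identity to each $N_t$ shows that the first variations of the two volume functionals agree. Conversely, every variation of $N/G$ lifts, through the orbit fibration, to a $G$-invariant variation of $N$. Therefore $N$ is a critical point of the volume functional among $G$-invariant variations if and only if $N/G$ is a critical point of the $g_k$-volume functional, i.e. if and only if $\bar f$ is minimal.

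It remains to remove the word ``$G$-invariant''. Here I invoke the principle of symmetric criticality: because $G$ is compact and acts by isometries, the volume functional is $G$-invariant and $N$ is a $G$-invariant submanifold, so $N$ is critical among all variations precisely when it is critical among $G$-invariant ones. Combining this with the previous equivalence yields that $f$ is minimal if and only if $\bar f$ is minimal. The hard part will be the careful verification of the orthogonal splitting and the resulting fiber-integration formula — in particular checking that horizontality inside $TN$ is compatible with horizontality in $TM$ and that the fiber volume is genuinely $V(\bar x)$; once this identity is in place, the variational equivalence and symmetric criticality are essentially formal.
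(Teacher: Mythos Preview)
The paper does not supply its own proof of this proposition: it is stated with attribution to Hsiang and Lawson and the text immediately proceeds to the next subsection. There is therefore nothing in the paper to compare your argument against.

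That said, your outline is essentially the original Hsiang--Lawson argument and is correct in spirit. The three ingredients --- the fiber-integration identity $\mathrm{Vol}(N,f^*g')=\mathrm{Vol}(N/G,\bar f^*g_k)$, the bijection between $G$-invariant variations of $N$ and arbitrary variations of $N/G$, and the principle of symmetric criticality to drop the $G$-invariance restriction --- are exactly what is used in~\cite{HsiangLawson}. The only technical point worth expanding in a full proof is the one you already flag: that the orthogonal complement of the orbit tangent space inside $TN$ coincides with $TN\cap H$, where $H$ is the horizontal distribution of $\pi$ in $TM$. This is immediate once one notes that the induced metric on $N$ is the restriction of $g'$, so orthogonality in $TN$ is inherited from orthogonality in $TM$; with that in hand, $d\pi$ is an isometry from this horizontal slice onto $(T(N/G),\bar f^*g)$ and the block-diagonal volume computation goes through as you describe.
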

\subsection{Otsuki tori.} Otsuki tori\label{OtsukiDef} were introduced by Otsuki in the paper~\cite{Otsuki}. Let us 
recall the concise description by Penskoi from the paper~\cite{PenskoiOtsuki}. For more details see Section 1.2 of the paper~\cite{PenskoiOtsuki}. Consider the action of $SO(2)$ on 
the three-dimensional unit sphere $\mathbb{S}^3 \subset \mathbb{R}^4$ given by the formula
$$
\alpha\cdot(x,y,z,t) = (\cos\alpha x+\sin\alpha y, -\sin\alpha x +\cos\alpha y, z, t),
$$ 
where $\alpha \in [0,2\pi)$ is a coordinate on $SO(2)$. The space of orbits $\mathbb{S}^3/SO(2)$ is the closed half-sphere
$\mathbb{S}^2_+$,
$$
q^2+z^2+t^2=1, \qquad q\geqslant 0,
$$
where a point $(q,z,t)$ corresponds to the orbit $(q\cos\alpha,q\sin\alpha,z,t) \in \mathbb{S}^3$. The space of principal orbits
$(\mathbb{S}^3)^*/SO(2)$ is the open half sphere $\mathbb{S}^2_{>0} = \{(q,z,t)\in\mathbb{S}^2|q>0\}$. It is natural to introduce the spherical
 coordinates in the space of orbits,
$$
\left\{
   \begin{array}{rcl}
	t &=& \cos\nu\sin\lambda,\\
	z &=& \cos\nu\cos\lambda,\\
	q &=& \sin\nu\\
   \end{array}
\right.
$$
Since we look for minimal submanifolds of cohomogeneity 1, the Hsiang-Lawson's metric 
is given by the formula
\begin{equation}
V^2(d\nu^2 + \cos^2\nu d\lambda^2) = 4\pi^2\sin^2\nu(d\nu^2 + \cos^2\nu d\lambda^2). \label{otsukimetric}
\end{equation}
\begin{definition} An immersed minimal $SO(2)$-invariant two-dimensional torus in $\mathbb{S}^3$ such that its image
 by the projection $\pi\colon\mathbb{S}^3\to\mathbb{S}^3/SO(2)$ is a closed geodesics in $(\mathbb{S}^3)^*/SO(2)$ 
equipped with the metric (\ref{otsukimetric}) is called an Otsuki torus.
\end{definition}
The following proposition can be found in the paper~\cite{PenskoiOtsuki}.
\begin{prop} Except one particular case given by the equation $\psi = \frac{\pi}{4}$, Otsuki tori are in\label{PenskoiProp} 
one-to-one correspondence with rational numbers $\frac{p}{q}$ such that
$$
\frac{1}{2}<\frac{p}{q}<\frac{\sqrt{2}}{2},\qquad p,q>0,\,(p,q) = 1.
$$
\end{prop}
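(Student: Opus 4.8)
The plan is to reduce the classification of Otsuki tori to that of closed geodesics of the metric (\ref{otsukimetric}) in the open half-sphere $\mathbb{S}^2_{>0}=(\mathbb{S}^3)^*/SO(2)=\{(\nu,\lambda)\colon 0<\nu<\frac{\pi}{2}\}$. By the definition of an Otsuki torus together with Proposition~\ref{HsiangTh}, a closed geodesic $\gamma$ of (\ref{otsukimetric}) produces an $SO(2)$-invariant minimal immersion of $\pi^{-1}(\gamma)$ into $\mathbb{S}^3$; since $SO(2)$ acts freely on $(\mathbb{S}^3)^*$ with circle orbits, $\pi^{-1}(\gamma)$ is an (immersed) torus, and conversely every Otsuki torus is obtained this way. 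Two such geodesics produce congruent tori exactly when they differ by a rotation $\lambda\mapsto\lambda+\mathrm{const}$, which is induced by an ambient isometry of $\mathbb{S}^3$; hence it suffices to classify the closed geodesics of (\ref{otsukimetric}) modulo this circle action.

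First I would write the geodesic equations. As (\ref{otsukimetric}) does not depend on $\lambda$, a unit-speed geodesic $(\nu(s),\lambda(s))$ has two first integrals,
\[
4\pi^2\sin^2\nu\,\bigl(\dot\nu^2+\cos^2\nu\,\dot\lambda^2\bigr)=1,\qquad 4\pi^2\sin^2\nu\cos^2\nu\,\dot\lambda=c,
\]
and eliminating the parameter gives the orbit equation
\[
\left(\frac{d\nu}{d\lambda}\right)^{2}=\cos^2\nu\left(\frac{\sin^2\nu\cos^2\nu}{\beta}-1\right),\qquad \beta:=\frac{c^{2}}{4\pi^{2}}.
\]
The right-hand side is nonnegative only where $\sin^2\nu\cos^2\nu=\tfrac14\sin^{2}(2\nu)\geqslant\beta$, which forces $0<\beta\leqslant\tfrac14$ (the excluded value $c=0$ gives the meridians $\lambda\equiv\mathrm{const}$, which run into the singular locus $\nu=0$ and never close). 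For $\beta=\tfrac14$ one must have $\nu\equiv\tfrac{\pi}{4}$: this is the equator, the exceptional solution excluded from the statement (the case $\psi=\tfrac{\pi}{4}$). For $0<\beta<\tfrac14$ the equation $\sin^2\nu\cos^2\nu=\beta$ has two roots $\nu_1(\beta)<\tfrac{\pi}{4}<\nu_2(\beta)$ in $(0,\tfrac{\pi}{2})$, and the geodesic oscillates between the parallels $\nu=\nu_1$ and $\nu=\nu_2$ without ever reaching the singular locus.

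The key quantity is the increment of $\lambda$ over one oscillation,
\[
\Phi(\beta)=2\int_{\nu_1(\beta)}^{\nu_2(\beta)}\frac{\sqrt{\beta}\;d\nu}{\cos\nu\,\sqrt{\sin^2\nu\cos^2\nu-\beta}}.
\]
A geodesic with parameter $\beta\in(0,\tfrac14)$ closes up precisely when $\rho(\beta):=\Phi(\beta)/(2\pi)$ is rational; writing $\rho(\beta)=p/q$ in lowest terms, the geodesic returns to its starting point after exactly $q$ oscillations, having wound $p$ times around the $\lambda$-circle. The substitution $w=\sin^2\nu$, under which the turning points become the roots $w_{1,2}=\tfrac12\bigl(1\mp\sqrt{1-4\beta}\bigr)$ of $w(1-w)=\beta$ (so $w_1w_2=\beta$, $w_1+w_2=1$), rewrites $\Phi$ as a complete elliptic-type integral, and a short computation shows that $\rho$ extends continuously to $[0,\tfrac14]$ with
\[
\rho(0)=\tfrac12,\qquad \rho\bigl(\tfrac14\bigr)=\tfrac{\sqrt2}{2}.
\]

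The heart of the argument, and the step I expect to be the main obstacle, is to prove that $\rho$ is \emph{strictly monotone} on $(0,\tfrac14)$; only then does each admissible rational value arise from a unique parameter $\beta$. I would carry this out after the change of variables $w=w_1\cos^2\theta+w_2\sin^2\theta$, which moves all $\beta$-dependence out of the limits of integration,
\[
\Phi(\beta)=2\sqrt{\beta}\int_{0}^{\pi/2}\frac{d\theta}{\bigl(w_2\cos^2\theta+w_1\sin^2\theta\bigr)\sqrt{w_1\cos^2\theta+w_2\sin^2\theta}},
\]
and then either differentiate in $\beta$ and verify that the resulting integrand has a fixed sign, or recognize the ratio as a monotone combination of complete elliptic integrals (alternatively, the profile-curve ODE can be analyzed directly as in Otsuki's original work). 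Granting strict monotonicity, $\rho$ is a homeomorphism of $(0,\tfrac14)$ onto $(\tfrac12,\tfrac{\sqrt2}{2})$, hence restricts to a bijection between $\{\beta\colon\rho(\beta)\in\mathbb{Q}\}$ and $\mathbb{Q}\cap(\tfrac12,\tfrac{\sqrt2}{2})$. Combined with the reduction of the first paragraph and the identification of $\beta=\tfrac14$ with the exceptional torus, this establishes the one-to-one correspondence between the remaining Otsuki tori and the pairs $(p,q)$ with $p,q>0$, $(p,q)=1$, $\tfrac12<\tfrac{p}{q}<\tfrac{\sqrt2}{2}$.
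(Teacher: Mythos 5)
Your proposal follows essentially the same route as the paper's (avowedly sketched) proof: reduce to closed geodesics of the Hsiang--Lawson metric (\ref{otsukimetric}) on the orbit half-sphere, extract the two first integrals, and study the rotation number of the resulting orbit oscillating between two parallels. Your $\rho(\beta)$ is exactly the paper's $\Omega(a)/\pi$ under $\beta=\sin^2a\cos^2a$, your boundary values $\rho(0)=\tfrac12$ and $\rho(\tfrac14)=\tfrac{\sqrt2}{2}$ are the paper's two listed properties of $\Omega$, and your identification of $\beta=\tfrac14$ with the exceptional solution $\nu\equiv\tfrac{\pi}{4}$ matches the excluded case in the statement. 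The one point of divergence is also the one point where your argument is not actually finished: you correctly isolate strict monotonicity of $\rho$ as what the ``one-to-one'' (injectivity) part of the claim really needs, call it the main obstacle, and then proceed only ``granting strict monotonicity''. You should be aware that the paper does not prove this either: its proof explicitly invokes only continuity of $\Omega$ and the two limiting values, citing \cite{Otsuki} for ``the rest'', and those two properties by themselves yield (via the intermediate value theorem) existence of a geodesic for each admissible rational, not uniqueness. So your conditional step is precisely the ingredient the paper outsources; if you wanted to close it yourself, the natural route is the one the author carries out for the analogous function $\Xi$ in Section~\ref{dxi}, namely rewriting $\Phi(\beta)$ as a complete elliptic integral of the third kind and differentiating with the classical formulas for $\partial\Pi/\partial n$ and $\partial\Pi/\partial k$. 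Apart from this shared gap, your reduction and computations (including the formula for $\Phi$ after the substitution $w=\sin^2\nu$ and the count of $q$ oscillations and $p$ windings) are correct and consistent with the paper.
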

\begin{definition} By $O_{\frac{p}{q}}$ we denote the Otsuki torus corresponding to 
$\frac{p}{q}$. Following the paper~\cite{PenskoiOtsuki} we reserve the term "Otsuki tori"
for the tori $O_{\frac{p}{q}}$.
\end{definition} 
In order to fix notations we give a sketch of the proof of Proposition~\ref{PenskoiProp}. 
\begin{proof} Let us use the standard notation for the coefficients of the metric (\ref{otsukimetric}),
$$
E = 4\pi^2\sin^2\nu, \qquad G = 4\pi^2\sin^2\nu\cos^2\nu.
$$
The equation of geodesics for $\ddot\lambda$ reads
$$
\ddot\lambda + \frac{1}{G}\frac{\partial G}{\partial\nu}\dot\nu\dot\lambda = 0.
$$
Hence, $2\pi c = G\dot\lambda$ is an integral of motion and 
\begin{equation}
\label{lambda}
\dot\lambda = \frac{c}{2\pi\cos^2\nu\sin^2\nu}.
\end{equation}
As we know the velocity vector of a geodesic has a constant length. Suppose this length equals 1. Then
\begin{equation}
\label{psi}
E\dot\nu^2 + G\dot\lambda^2 = 1 \Leftrightarrow \dot\nu^2 = \frac{\sin^2\nu\cos^2\nu - c^2}{4\pi^2\sin^4\nu\cos^2\nu}
\end{equation} 
This implies $\sin^2\nu\cos^2\nu - c^2 \geqslant 0$ and $\sin^2\nu\cos^2\nu=c^2$ iff $\dot\nu = 0$. 

Since the point corresponding to $\nu=0$ does not belong to $(\mathbb{S}^3)^*/SO(2)$, there exists a minimal value $a$ of the coordinate $\nu$
on a geodesic. Therefore $c = \pm\sin a\cos a$ and the geodesics are situated in the annulus 
$a\leqslant \nu \leqslant \frac{\pi}{2} - a$. We choose a natural parameter $t$ such that $\nu(0) = a$.

Equations (\ref{lambda}) and (\ref{psi}) imply
$$
\frac{d\nu}{d\lambda} = \pm\cos\nu\frac{\sqrt{\sin^2\nu\cos^2\nu - \sin^2 a\cos^2 a}}{\sin a\cos a}.
$$
The right hand side of this equation equals 0 only at $\nu = a$ and $\nu = \frac{\pi}{2} - a$.

Let us denote by $\Omega(a)$ the distance between the value of $\lambda$ corresponding to $\nu = a$ and the closest to it value 
of $\lambda$ corresponding to $\nu = \frac{\pi}{2}-a$. It is clear that
$$
\Omega(a) = \sin a\cos a\int\limits_a^{\frac{\pi}{2}-a}\frac{d\nu}{\cos\nu\sqrt{\sin^2\nu\cos^2\nu - \sin^2 a\cos^2 a}}.
$$

The geodesic is closed iff $\Omega(a) = \dfrac{p}{q}\pi$. The rest of the proof follows from properties of the function
 $\Omega(a)$, see the paper~\cite{Otsuki},
\begin{itemize}
\item[1)] $\Omega(a)$ is continuous on $\left(0,\dfrac{\pi}{4}\right]$,
\item[2)] $\lim\limits_{a\to 0+}\Omega(a) = \dfrac{\pi}{2}$ and $\Omega\left(\dfrac{\pi}{4}\right) = \dfrac{\pi}{\sqrt{2}}$.
\end{itemize}
\end{proof}

The Otsuki tori 
$O_{\frac{p}{q}}$ are minimally immersed into $\mathbb{S}^3$ by 
\begin{equation*}
\begin{split}
I_a&\colon [0,2\pi)\times [0,\tilde t) \to \mathbb{R}^4 \\ I_a&(\alpha,t) = (\cos\alpha\sin\nu (t), \sin\alpha\sin\nu (t), 
\cos\nu (t)\cos\lambda (t),\cos\nu (t)\sin\lambda (t)),
\end{split}
\end{equation*}
where $\Omega(a) = \frac{p}{q}\pi$ and $t$ is a natural parameter on the corresponding closed geodesic $\pi(O_\frac{p}{q})$ such that 
$\min\limits_t\nu(t) = \nu(0) = a$ and $\tilde t$ is the length of this geodesic.
\subsection{Construction of bipolar surfaces.} 
Following the papers~\cite{Kenmoutsu, Lawson}, we define the surface $\tilde O_{\frac{p}{q}}$ bipolar to $O_{\frac{p}{q}}$ as an 
exterior product of $I$ and $I^*$, where $I^*$ is a unit vector normal to the torus $O_{\frac{p}{q}}$ and tangent to 
$\mathbb{S}^3$. By a straightforward computation one obtains
\begin{equation*}
\begin{split}
I^*_a = 2\pi\sin\nu(\dot\lambda\cos^2\nu\cos\alpha&, \dot\lambda\cos^2\nu\sin\alpha, 
\dot\nu\sin\lambda - \dot\lambda\cos\nu\sin\nu\cos\lambda,\\& -\dot\nu\cos\lambda  -
\dot\lambda\cos\nu\sin\nu\sin\lambda),
\end{split}
\end{equation*} 
where the dot denotes the derivative with respect to $t$, and
\begin{equation}
\begin{split}
&I_a\wedge I^*_a = \\&2\pi\sin\nu(0,\cos\alpha(\dot\lambda\cos\lambda\cos\nu-\dot\nu\sin\lambda\sin\nu), 
\cos\alpha(\dot\lambda\sin\lambda\cos\nu+\dot\nu\cos\lambda\sin\nu),\\
&\sin\alpha(\dot\lambda\cos\lambda\cos\nu-\dot\nu\sin\lambda\sin\nu),
 \sin\alpha(\dot\lambda\sin\lambda\cos\nu+\dot\nu\cos\lambda\sin\nu), \dot\nu\cos\nu).
\end{split} 
\label{bipolarimmersion1}
\end{equation}

The paramerized surface $I_a\wedge I^*_a$ is a minimal (see a proof in the paper~\cite
{Lawson}) immersed submanifold in the equator $\mathbb{S}^4 \subset \mathbb{S}^5$. But formula 
(\ref{bipolarimmersion1}) is inconvenient. In the next section another parametrization of $\tilde O_{\frac{p}{q}}$ is 
proposed. 
\label{BipolarDef}
\subsection{Paramerization of $\tilde O_{\frac{p}{q}}$.} Let us now apply the Hsiang-Lawson's reduction theorem 
(Proposition~\ref{HsiangTh}) in the case\label{BipolarParametrization} 
of $M=\mathbb{S}^4$ and $G=SO(2)$. Let $x,y,z,u,v$ be the standard coordinates in 
$\mathbb{R}^5$ and $\mathbb{S}^4$ be the standard unit sphere in $\mathbb{R}^5$. Let us consider 
an action of $SO(2)$ given by the formula
$$
\alpha\cdot(x,y,z,u,v) = (\cos\alpha x-\sin\alpha y, \sin\alpha x +\cos\alpha y,
 \cos\alpha z-\sin\alpha u, \sin\alpha z +\cos\alpha u, v),
$$ 
where $\alpha\in[0,2\pi)$ is a coordinate on $SO(2)$. 

The principal orbits are circles of radius $\sqrt{x^2+y^2+z^2+u^2}$, 
the exceptional orbits are the poles $N = (0,0,0,0,1)$ and $S = (0,0,0,0,-1)$. It is easy to see, that for each principal orbit there are exactly two points
on the equatorial sphere $\mathbb{S}^3$ of the unit sphere $\mathbb{S}^4$ given by the equation $y=0$.
Therefore, the space of orbits $(\mathbb{S}^4)^*/SO(2)$ can be identified with the quotient of this equatorial sphere $\mathbb{S}^3$ by the action of $\mathbb{Z}_2$ given by 
\begin{equation}
\label{action}
\sigma(x,0,z,u,v) = (-x,0,-z,-u,v),
\end{equation}
where $\sigma$ is the nontrivial element of $\mathbb{Z}_2$. Let us call 
the equatorial sphere given by the equation $y=0$ a generalized space of orbits. Let us denote by $p$ the quotient map from the generalized space of orbits
to the space of orbits,
$$
p\colon \mathbb{S}^3\backslash\{N,S\} \to (\mathbb{S}^4)^*/SO(2).
$$
Let us denote by $\tilde\pi$ the natural projection of $(\mathbb{S}^4)^*$ onto the space of orbits.

Let $g_1$ be the Hsiang-Lawson's metric on the space of orbits. The preimage $p^{-1}(s)$ of a closed geodesic $s$ in the space of orbits is either 
 a closed geodesic $\gamma$ in $(\mathbb{S}^3\backslash\{N,S\},p^*g_1)$ such that $\sigma\gamma = \gamma$, or a pair of closed 
geodesics $\{\gamma_1,\gamma_2\}$ in $(\mathbb{S}^3\backslash\{N,S\},p^*g_1)$ such that $\sigma\gamma_1 = \gamma_2$.
Thus, each geodesic in the space of orbits is the image $p(\gamma)$ of some geodesic $\gamma$ in the generalized space of orbits.

It is useful to introduce the spherical coordinates in the generalized space of orbits,
$$
\left\{
   \begin{array}{rcl}
	x &=& \cos\varphi\sin\theta,\\
	z &=& \cos\varphi\cos\theta\cos\rho,\\
	u &=& \cos\varphi\cos\theta\sin\rho,\\
	v &=& \sin\varphi.\\
   \end{array}
\right.
$$
Then the pullback of the volume function to the generalized space of orbits is given by the formula $V(\varphi,\theta,\rho) = 
2\pi\sin\varphi$.

These coordinates induce coordinates on $\mathbb{S}^4$ by the following formulae,
$$
\left\{
    \begin{array}{rcl}
	x &=& \cos\alpha\cos\varphi\sin\theta,\\
	y &=& \sin\alpha\cos\varphi\sin\theta,\\
	z &=& \cos\alpha\cos\varphi\cos\theta\cos\rho - \sin\alpha\cos\varphi\cos\theta\sin\rho,\\
	u &=& \sin\alpha\cos\varphi\cos\theta\cos\rho + \cos\alpha\cos\varphi\cos\theta\sin\rho,\\
	v &=& \sin\varphi,\\
    \end{array}
\right.	
$$
where $\alpha\in[0,\pi)$.
The metric on $\mathbb{S}^4$ is given by the formula,
\begin{equation}
\label{tildeg1}
\cos^2\varphi d\alpha^2 + d\varphi^2 + \cos^2\varphi d\theta^2 + \cos^2\varphi\cos^2\theta 
(d\alpha d\rho + d\rho^2),
\end{equation}
and the induced metric on the generalized space of orbits is given by the formula,
$$ 
g = d\varphi^2 + \cos^2\varphi d\theta^2 + \cos^2\varphi\cos^2\theta\sin^2\theta d\rho^2.
$$

Minimal $SO(2)$-invariant submanifolds of cohomoheneity 1 of the sphere $\mathbb{S}^4$ correspond to closed 
geodesics in the space of orbits $(\mathbb{S}^4)^*/SO(2)$. According to the discussion at the 
beginning of this section, in order to find these submanifolds it is sufficient to find closed geodesics in $\mathbb{S}^3\backslash\{N,S\}$ equipped with the 
metric
$$
g_1 = V^2g = 4\pi^2\cos^2\varphi(d\varphi^2 + \cos^2\varphi d\theta^2 + \cos^2\varphi\cos^2\theta\sin^2\theta d\rho^2).
$$
Indeed, for any closed geodesic $s$ in the space of orbits there exists a closed geodesic $\gamma$ in the generalized space of orbits such that $p(\gamma) = s$. Therefore, the minimal submanifold $\tilde\pi^{-1}(s)$ coincides with the submanifold $\tilde\pi^{-1}(p(\gamma))$. Moreover, the image by $p$ of a geodesic in the generalized space of orbits is a geodesic in the space of orbits. Hence, the set of submanifolds $\tilde\pi^{-1}(p(\gamma))$ is exactly the set of minimal $SO(2)$-invariant submanifolds of cohomoheneity 1.  

Since the coefficients of the metric $g_1$ do not depend on $\rho$, the 2-dimensional sphere 
defined by $\rho = 0$ is the totally geodesic 2-sphere equipped with the metric
$$
\tilde g_1 = 4\pi^2\cos^2\varphi(d\varphi^2 + \cos^2\varphi d\theta^2).
$$

Let us now look for minimal submanifolds of the special type. Consider the sphere $\mathbb{S}^2\subset\mathbb{S}^4$ defined by $y = 0,\,\rho = 0$.
Then for a closed geodesic $\gamma(t) = (\varphi(t),\theta(t))$ in the space $(\mathbb{S}^2\backslash\{N,S\},\tilde g_1)$ one
has the corresponding immersed minimal submanifold $\tilde\pi^{-1}(p(\gamma))$ in $\mathbb{S}^4$. The immersion $J$ is given by 
the formula
\begin{equation}
\label{bipolarimmersion2}
    \begin{split}
	x &= \cos\alpha\cos\varphi(t)\sin\theta(t),\\
	y &= \sin\alpha\cos\varphi(t)\sin\theta(t),\\
	z &= \cos\alpha\cos\varphi(t)\cos\theta(t),\\
	u &= \sin\alpha\cos\varphi(t)\cos\theta(t),\\
	v &= \sin\varphi(t),
    \end{split}
\end{equation}
where $\alpha\in[0,2\pi)$. 
\begin{prop} The set of bipolar surfaces $\tilde O_{\frac{p}{q}}$ coincides with the set of minimal surfaces
$\tilde\pi^{-1}(p(\gamma))\subset\mathbb{S}^4$, where $\gamma$ is a closed geodesic in the space 
$(\mathbb{S}^2\backslash\{N,S\},\tilde g_1)$.
\label{parametrization} 
\end{prop}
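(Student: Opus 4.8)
The plan is to write down an explicit change of variables turning the bipolar parametrization (\ref{bipolarimmersion1}) into the reduction parametrization (\ref{bipolarimmersion2}), and then to transport minimality through the Hsiang--Lawson reduction (Proposition~\ref{HsiangTh}). Recall first that, by Lawson's computation, $\tilde O_{\frac{p}{q}}$ is a minimal immersed $SO(2)$-invariant torus in $\mathbb{S}^4\subset\mathbb{S}^5$ of cohomogeneity one. The first coordinate in (\ref{bipolarimmersion1}) vanishes identically, so the image lies in $\mathbb{R}^5$; writing $A=\dot\lambda\cos\lambda\cos\nu-\dot\nu\sin\lambda\sin\nu$, $B=\dot\lambda\sin\lambda\cos\nu+\dot\nu\cos\lambda\sin\nu$ and permuting the five coordinates of $\mathbb{R}^5$, the immersion $I_a\wedge I^*_a$ takes the form $(\cos\alpha\,P,\sin\alpha\,P,\cos\alpha\,Q,\sin\alpha\,Q,R)$ with $P=2\pi\sin\nu\,A$, $Q=2\pi\sin\nu\,B$, $R=2\pi\sin\nu\cos\nu\,\dot\nu$. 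Using $A^2+B^2=\dot\lambda^2\cos^2\nu+\dot\nu^2\sin^2\nu$ together with the unit-speed relation (\ref{psi}) one checks $P^2+Q^2+R^2=1$; comparing with (\ref{bipolarimmersion2}), this is exactly formula (\ref{bipolarimmersion2}) evaluated along the curve $\gamma(t)=(\varphi(t),\theta(t))$ in the $\rho=0$ totally geodesic sphere $\mathbb{S}^2\subset\mathbb{S}^4$, where $\sin\varphi=R$, $\cos\varphi\sin\theta=P$, $\cos\varphi\cos\theta=Q$. In particular the $SO(2)$-action leaving $\tilde O_{\frac{p}{q}}$ invariant is, after this orthogonal reparametrization of $\mathbb{R}^5$, the action of Section~\ref{BipolarParametrization}, and one sees $\cos^2\varphi=\sin^2\nu+c^2/\sin^2\nu\geqslant 2|c|>0$, so that $\gamma$ stays in $\mathbb{S}^2\backslash\{N,S\}$.

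Since $\tilde O_{\frac{p}{q}}$ is minimal in $\mathbb{S}^4$ and of cohomogeneity one, Proposition~\ref{HsiangTh} with $k=1$ shows that $\tilde\pi(\tilde O_{\frac{p}{q}})$ is a closed geodesic of the orbit space with the metric $g_1$; by the discussion of Section~\ref{BipolarParametrization} it equals $p(\gamma)$ for a closed geodesic $\gamma$ of the generalized space of orbits, and since $\rho=0$ cuts out a totally geodesic sphere, $\gamma$ is a closed geodesic of $(\mathbb{S}^2\backslash\{N,S\},\tilde g_1)$ (it is closed because $\tilde O_{\frac{p}{q}}$ is compact and connected). By construction $\tilde O_{\frac{p}{q}}=\tilde\pi^{-1}(p(\gamma))$, which gives one inclusion. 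For the converse, let $\gamma$ be a closed geodesic of $(\mathbb{S}^2\backslash\{N,S\},\tilde g_1)$ with $\tilde g_1=4\pi^2\cos^2\varphi\,d\varphi^2+4\pi^2\cos^4\varphi\,d\theta^2$. Since $\theta$ is cyclic, $4\pi^2\cos^4\varphi\,\dot\theta$ is constant and the unit-speed condition gives $\dot\varphi^2=(\cos^4\varphi-k^2)/(4\pi^2\cos^6\varphi)$ for a constant $k$. Inverting the relations above — solving $\cos^2\varphi=\sin^2\nu+c^2/\sin^2\nu$ for $\sin^2\nu$ (with $c$ the constant of (\ref{lambda})) and recovering $\lambda$ from $P$ and $Q$ — one verifies that $(\nu(t),\lambda(t))$ solves the Otsuki geodesic equations (\ref{lambda}), (\ref{psi}) with turning value $a$ determined by $c=\pm\sin a\cos a$, and that $\gamma$ being closed is equivalent to $\Omega(a)=\frac{p}{q}\pi$ for some rational $\frac pq\in(\frac12,\frac{\sqrt2}{2})$ as in Proposition~\ref{PenskoiProp}. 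Hence $\tilde\pi^{-1}(p(\gamma))=\tilde O_{\frac pq}$, and the two sets coincide.

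The main difficulty lies in the converse direction and is essentially bookkeeping: one must check that the substitution $\nu\leftrightarrow\varphi$ genuinely sends closed geodesics of $\tilde g_1$ to closed Otsuki geodesics, matching the period in the natural parameter $t$ and the increment of $\lambda$ against that of $\theta$ over one period, so that the closing-up condition on one side corresponds exactly to the closing-up condition on the other (note that $\varphi$ completes a full oscillation between its turning points over only half an Otsuki period, which must be accounted for). One also has to dispose of the degenerate case $\dot\nu\equiv 0$ — the Clifford torus, Penskoi's exceptional value $\psi=\frac{\pi}{4}$ — which corresponds to the boundary geodesic $\gamma=\{\varphi=0\}$ and is to be excluded from, or treated separately in, the correspondence.
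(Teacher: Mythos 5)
Your proposal is correct in substance and, for the forward inclusion, takes a genuinely different and in fact cleaner route than the paper. You quote Lawson's minimality of $I_a\wedge I_a^*$, observe that after a permutation of the five coordinates the immersion takes the equivariant form $(\cos\alpha\,P,\sin\alpha\,P,\cos\alpha\,Q,\sin\alpha\,Q,R)$ with $P^2+Q^2+R^2=1$ and with projection lying in the slice $\rho=0$, and then let Proposition~\ref{HsiangTh} do the work: minimal plus $SO(2)$-invariant plus cohomogeneity one forces the projection to be a closed geodesic of $\tilde g_1$, and your bound $\cos^2\varphi=\sin^2\nu+c^2/\sin^2\nu\geqslant 2|c|>0$ correctly keeps it away from the poles. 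The paper instead constructs an explicit local reparametrization $\tau(t)$ matching $I_a\wedge I_a^*(\alpha,\tau(t))$ with $J_{b(a)}(\alpha,t)$, derives the ODE (\ref{tau}) for $\tau$, and must then verify --- a computation it explicitly omits --- that the resulting $\theta(t)$ satisfies the geodesic equation (\ref{theta}); your argument makes that verification unnecessary in this direction, which is what the Hsiang--Lawson machinery buys you. For the converse inclusion, however, your plan (invert the substitution and check the Otsuki geodesic equations for $(\nu(t),\lambda(t))$) is precisely the deferred computation of the paper run backwards, so you have relocated rather than avoided it; a cheaper way to close this direction, consistent with your forward argument, is to note that geodesics of $\tilde g_1$ are determined up to a $\theta$-rotation by their turning value $b$, that $a\mapsto b(a)$ with $\cos^4 b(a)=4\sin^2a\cos^2a$ is onto, and that the identification of images makes $\gamma_{b(a)}$ closed iff $\pi(I_a)$ is closed. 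Your explicit flagging of the period bookkeeping and of the exceptional Clifford case $a=\tfrac{\pi}{4}$ (equivalently $b=0$) is appropriate; the paper passes over both points in silence.
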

\begin{proof} In the same way as in the proof of Proposition \ref{PenskoiProp}, one obtains
\begin{equation}
\label{theta}
\dot\theta = \frac{\cos^2 b}{2\pi\cos^4\varphi} 
\end{equation}
and
\begin{equation}
\label{phi}
\dot\varphi = \pm\frac{\sqrt{\cos^4\varphi - \cos^4 b}}{2\pi\cos^3\varphi}.
\end{equation}
By $J_b(\alpha,t)$ denote the immersion of the minimal submanifold corresponding to 
a geodesic such that the minimal value of $\varphi(t)$ equals to $b$, where $t$ is a natural 
parameter on the geodesic $\tilde\pi(J_b)$.
Let us show that for any point $\gamma(t)$ on the geodesic $\pi(I_a)$ there exists a neighbourhood $U$ of a point $t\in \mathbb{R}/(l\mathbb{Z})$, where $l$ is the length of the geodesic $\pi(I_a)$, 
and a function $\tau(t)$ defined on $U$, such that $I_a\wedge I_a^*(\alpha,\tau(t)) = J_{b(a)}(\alpha,t)$, where 
$\cos^4b(a) = 4\sin^2a\cos^2a$. Comparing equations (\ref{bipolarimmersion1}) and (\ref{bipolarimmersion2}) one obtains
\begin{equation}
\label{1}
\sin\varphi(t) = 2\pi\dot\nu(\tau(t))\cos\nu(\tau(t))\sin\nu(\tau(t)).
\end{equation}

Let us consider the case $\dot\nu > 0$ and $\dot\varphi > 0$. On the one hand, using formula (\ref{psi}), one has
\begin{equation}
\label{2}
\sin\varphi(t) = \frac{\sqrt{\cos^2\nu(\tau(t))\sin^2\nu(\tau(t)) - c^2}}{\sin\nu},
\end{equation}
where $c = \sin a\cos a$. Applying $\dfrac{d}{dt}$ to equation (\ref{2}) and using formula (\ref{psi}) one obtains
$$
\dot\varphi(t)\cos\varphi(t) = \dot\tau(t)\frac{c^2 - \sin^4\nu(\tau(t))}{2\pi\sin^4\nu(\tau(t))\cos\nu(\tau(t))}.
$$
On the other hand, combining equations (\ref{phi}) and (\ref{1}) one has the following formula,
$$
\dot\varphi(t)\cos\varphi(t) = \frac{c^2 - \sin^4\nu(\tau(t))}{2\pi(\sin^4\nu(\tau(t)) + c^2)}.
$$ 
Therefore, one obtains a differential equation for $\tau(t)$,
\begin{equation}
\label{tau}
\dot\tau = \frac{\sin^4\nu(\tau)\cos\nu(\tau)}{\sin^4\nu(\tau) + c^2}.
\end{equation}
Let $\tau(t)$ be a solution of this equation. Comparing equations (\ref{bipolarimmersion1}) and 
(\ref{bipolarimmersion2}) one has the following formulae,
\begin{equation}
\label{3}
\begin{split}
\cos\varphi(t)\sin\theta(t) &= 2\pi\sin\psi(\tau(t))(\dot\lambda\cos\lambda\cos\psi - \dot\nu\sin\lambda\sin\nu)(\tau(t)),\\
\cos\varphi(t)\cos\theta(t) &= 2\pi\sin\psi(\tau(t))(\dot\lambda\sin\lambda\cos\psi + \dot\nu\cos\lambda\cos\nu)(\tau(t)).
\end{split}
\end{equation}

One should prove that for a function $\theta(t)$ defined by implicit formulae (\ref{3}) differential equation (\ref{theta}) holds.
 This can be shown by a straightforward calculation (we omit it in order to shorten the paper).
This completes the proof.
\end{proof}   
\subsection{Properties of the new parametrization.}
Let us denote by $t_0$ the length of the geodesic $\tilde\pi(\tilde O_{\frac{p}{q}})$ with respect to the metric $\tilde g_1$. As coordinates on the torus $\tilde O_{\frac{p}{q}}$ we take the parameter $\alpha\in [0,2\pi)$ on $SO(2)$ and a 
natural parameter $t \in [0,t_0)$ on the geodesic $\tilde\pi(\tilde O_{\frac{p}{q}}) = (\varphi(t),\theta(t))$ such that $\min\limits_t\varphi(t) = \varphi(0) = b$.
\label{Properties}
\begin{prop}
The function $\sin\varphi(t)$ has exactly $2q$ zeroes on $[0,t_0)$, the functions $\cos\theta(t)$ and $\sin\theta(t)$ both
have exactly $2p$ zeroes on $[0,t_0)$. If $q$ is even then the immersion $J_b$ is invariant under the transformation 
$(\alpha,t) \mapsto \left(\alpha+\pi,t+\dfrac{t_0}{2}\right)$. The immersion $J$ is not invariant under any other
transformations.\label{zeroam}  
\end{prop}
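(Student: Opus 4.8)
The plan is to analyze the functions $\varphi(t)$ and $\theta(t)$ using the ODEs \eqref{theta} and \eqref{phi} together with the correspondence with the Otsuki torus $O_{p/q}$ established in Proposition~\ref{parametrization}, and then read off the zero counts and the symmetry. First I would recall, as in the proof of Proposition~\ref{PenskoiProp}, that along the geodesic $\tilde\pi(\tilde O_{p/q})$ the coordinate $\varphi(t)$ oscillates monotonically between its minimum $b$ and its maximum (the value where $\cos^4\varphi = \cos^4 b$ forces $\dot\varphi = 0$, i.e.\ $\varphi = $ the reflected value), so $\varphi$ is a periodic ``swinging'' function; meanwhile $\theta(t)$ is monotone by \eqref{theta} since $\dot\theta > 0$ always. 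The closure condition on the geodesic in $(\mathbb{S}^2\setminus\{N,S\},\tilde g_1)$ is exactly the rationality condition $\Omega(a) = \frac{p}{q}\pi$ via the substitution $\cos^4 b(a) = 4\sin^2 a\cos^2 a$; the key bookkeeping is that, over one period of $\varphi$, the angle $\theta$ increases by a fixed amount, and the geodesic closes after $q$ periods of $\varphi$ with $\theta$ having wound around $p$ times (up to the factor accounting for the $\mathbb{Z}_2$-action $\sigma$ in \eqref{action}).

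Next I would count zeroes. From the immersion \eqref{bipolarimmersion2}, $v = \sin\varphi(t)$ vanishes precisely when $\varphi(t) = 0$. Relating this to the Otsuki picture via \eqref{1}–\eqref{2}: $\sin\varphi = 0$ corresponds to $\dot\nu = 0$ on $O_{p/q}$, i.e.\ to the turning points $\nu = a$ and $\nu = \frac{\pi}{2}-a$ of the Otsuki geodesic. Over the full length $t_0$, the Otsuki geodesic $\pi(O_{p/q})$ has $q$ ``ascents'' and $q$ ``descents'' between $\nu = a$ and $\nu = \frac{\pi}{2}-a$ (this is what $\Omega(a) = \frac{p}{q}\pi$ encodes), giving $2q$ turning points, hence $2q$ zeroes of $\sin\varphi(t)$. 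For $\cos\theta$ and $\sin\theta$: since $\theta$ is monotone and increases by a total of $2\pi p$ over $[0,t_0)$ (the $p$-fold winding), each of $\cos\theta$ and $\sin\theta$ has exactly $2p$ zeroes. Here one must be careful about the precise total increment of $\theta$ — I would pin it down by integrating \eqref{theta} over one $\varphi$-period and using the closed-geodesic condition, checking against the known relation between $b$ and $a$.

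For the symmetry statement, I would examine how one $\varphi$-period relates to the next. The function $\varphi(t)$ has period $t_0/q$ (it returns to its minimum $q$ times), while over that same interval $\theta$ advances by $2\pi p / q$; these need not separately be genuine periods of the immersion $J_b$ unless $q \mid$ the relevant integers. The transformation $(\alpha,t)\mapsto(\alpha+\pi, t+\frac{t_0}{2})$ shifts $t$ by half the full length, i.e.\ by $q/2$ periods of $\varphi$ — this makes sense only when $q$ is even, and then $\varphi(t+\frac{t_0}{2}) = \varphi(t)$ while $\theta(t+\frac{t_0}{2}) = \theta(t) + \pi p$. Plugging into \eqref{bipolarimmersion2}: the shift $\theta \mapsto \theta + \pi p$ multiplies $(\sin\theta,\cos\theta)$ by $(-1)^p$, and since $(p,q)=1$ with $q$ even forces $p$ odd, this is the sign $-1$; simultaneously $\alpha\mapsto\alpha+\pi$ sends $(\cos\alpha,\sin\alpha)$ to $(-\cos\alpha,-\sin\alpha)$, and the two sign changes cancel in the coordinates $x,y,z,u$ (each being a product of one $\alpha$-factor and one $\theta$-factor) while $v = \sin\varphi$ is unchanged — so $J_b$ is invariant. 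To see there are no other invariances, I would argue that any symmetry of the immersion must descend to a symmetry of the geodesic $\tilde\pi(\tilde O_{p/q})$ in the space of orbits together with a deck transformation of the circle fibration; the geodesic, being a generic closed geodesic realizing a primitive rotation number, has symmetry group generated by the single half-period reflection/rotation, and combined with the constraint that the $\alpha$-shift be compatible, only the identity and the listed map survive. The main obstacle I expect is precisely this last uniqueness claim: ruling out \emph{all} other transformations requires a clean description of the isometry group of the closed geodesic as a parametrized curve and care with the interplay between the $\mathbb{Z}_2$-quotient \eqref{action} and the $SO(2)$-fibers, rather than any single computation.
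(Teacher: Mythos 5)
Your skeleton matches the paper's: reduce to the Otsuki geodesic via Proposition~\ref{parametrization}, count turning points of $\nu$ to get the zeros of $\sin\varphi$, count the winding of $\theta$ to get the zeros of $\cos\theta$ and $\sin\theta$, and check the half-period shift. Your direct verification of the invariance for even $q$ (using that $(p,q)=1$ forces $p$ odd, so the sign flips coming from $\alpha\mapsto\alpha+\pi$ and $\theta\mapsto\theta+\pi p$ cancel in each coordinate of~(\ref{bipolarimmersion2})) is correct and in fact more explicit than what the paper writes. But two essential steps are missing, and they are exactly where the difficulty of the proposition lies.

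First, the assertion that $\theta$ advances by $2\pi p$ over $[0,t_0)$ is the thing to be proved, not an input. Integrating~(\ref{theta}) over a half-oscillation of $\varphi$ only gives the advance $\Xi(b)=\cos^2 b\int_b^{-b}\frac{d\varphi}{\cos\varphi\sqrt{\cos^4\varphi-\cos^4b}}$, and the closure condition only forces $\Xi(b)=\frac{r}{q}\pi$ for \emph{some} $r$ coprime to $q$, which would give $2r$ zeros. Identifying $r=p$ is equivalent to the functional identity $\Xi(b(a))=\Omega(a)$, which the paper proves by a separate monotonicity-plus-density argument (comparing, for each denominator $s$, the sets of rational multiples of $\pi$ attained by $\Omega$ and by $\Xi\circ b$, and then passing to inverse functions); this does not follow from any single integration or from ``checking against the relation between $b$ and $a$.'' Second, your count of $2q$ zeros of $\sin\varphi$ via $\sin\varphi(t)=2\pi\dot\nu\cos\nu\sin\nu(\tau(t))$ tacitly assumes that the reparametrization carries $[0,t_0)$ onto the Otsuki geodesic exactly once; if the curve parametrized by $s$ double-covered $\tilde\pi(J_b)$ you would get only $q$ zeros, and the paper spends a paragraph ruling this out by analyzing where the last coordinate of the parametrization vanishes. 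Finally, your ``generic closed geodesic'' argument for the absence of other invariances is only a gesture; the paper's mechanism is concrete: a self-coincidence of $J_b$ forces the geodesic to be invariant under the involution~(\ref{action}), i.e.\ to pass through $(b,\pi)$, and the already-established zero counts show the points with $\varphi=b$ have $\theta=\frac{2kp}{q}\pi$, so this happens exactly when $q$ is even and $k=\frac{q}{2}$. The uniqueness claim is thus a corollary of the counting, not a separate genericity statement, and you would need to supply that argument.
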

\begin{proof}
Let us remark that the immersions $I_a$ and $J_b$ are well-defined even if the corresponding geodesics are not closed. 
We proved in Proposition~\ref{parametrization} that the bipolar surface to $I_a$ corresponds to the geodesic $\tilde\pi(J_b)$,
where
\begin{equation}
\label{b(a)}
\cos^4b = 4\sin^2a\cos^2a.
\end{equation}
Hence, $\pi(I_a)$ is closed iff $\tilde\pi(J_b)$ is closed.

According to formula~(\ref{bipolarimmersion1}), the geodesic $\tilde\pi(J_b)$ admits another parametrization in terms of 
$\lambda(s)$ and $\nu(s)$, where $s$ is a natural parameter on $\pi(I_a)$. It is easy to see that this parametrization is one-to-one outside of self-intersection points, i.e. for the map
$$
\beta(s)=2\pi\sin\nu(\dot\lambda\cos\lambda\cos\nu-\dot\nu\sin\lambda\sin\nu, 
\dot\lambda\sin\lambda\cos\nu+\dot\nu\cos\lambda\sin\nu,\dot\nu\cos\nu),
$$ 
where $s\in[0,\tilde t)$, there is no point $\tilde s$ such that $\beta([0,\tilde s)) = \beta([\tilde s,\tilde t)) = \beta([0,\tilde t))$. Indeed, 
since $a\leqslant\nu\leqslant\dfrac{\pi}{2}-a$,
the last coordinate is equal to zero only at zeroes of $\dot\nu(s)$, i.e at $s_d=\dfrac{\tilde td}{2q}$, where $d=0,1,\ldots, 
2q-1$ and $\lambda(s_d) = \dfrac{pd}{q}\pi$. Hence, there exists $d=0,1,\ldots, 
2q-1$ such that $\tilde s = s_d$. Moreover, $\nu(s_d) = a$ if $d$ is even 
and $\nu(s_d) = \dfrac{\pi}{2} - a$ if $d$ is odd. The value of $2\pi\dot\lambda\sin\nu\cos\nu = \dfrac{\sin 
a}{\sin\nu\cos\nu}$ is equal to $\dfrac{1}{\cos a}$ for each point $s_d$. Therefore, $\cos\lambda(s_d)=\cos\lambda(0) = 1$ and 
$\sin\lambda(s_d)=\sin\lambda(0)=0$. This holds for $s_0=0$ and possibly for $s_q$. In the latter case $\nu(s_q) = 
\dfrac{\pi}{2} - a$ and $(2\pi\dot\nu\sin\nu\cos\nu)(s_q+\varepsilon)<0$ for sufficiently small $\varepsilon$. This 
contradicts the fact that $(2\pi\dot\nu\sin\nu\cos\nu)(\varepsilon)>0$.

The previous statement implies that the function $\sin\varphi(t)$ has the same quantity of zeroes as
$2\pi\dot\nu(t)\sin\nu(t)\cos\nu(t)$, i.e. $\sin\varphi(t)$ has exactly $2q$ zeroes.

Let us introduce a function analogous to $\Omega(a)$. The function $\Xi(b)$ equals the distance between the nearest points
on the geodesic $\tilde\pi(J_b)$ with $\varphi = b$ and $\varphi = -b$,
$$
\Xi(b) = \cos^2b\int\limits_{b}^{-b}\frac{1}{\cos\varphi\sqrt{\cos^4\varphi-\cos^4b}}\,d\varphi.
$$  
In Section~\ref{dxi} the following proposition is proved.
\begin{prop}
The function $\Xi(b)$ is increasing and continuous on the interval $\left(-\dfrac{\pi}{2}, 0\right)$. The following equality holds, $\lim\limits_{b\to 0-}\Xi(b) = \dfrac{\sqrt{2}}{2}\pi$.
\label{xi}
\end{prop}
The geodesic $\tilde\pi(J_b)$ is closed iff $\Xi(b) = \dfrac{r}{s}\pi$, where $r,s\in\mathbb{Z}_{\geqslant 0}$. Without loss of generality one can assume that $(r,s) = 1$. Since $\sin\varphi(t)$ has $2q$ zeroes, one has
$s=q$. According to formula~(\ref{b(a)}), the function $b(a)$ increases as $a$ increases. So, we have two increasing continuous functions 
$\Omega(a)$ and $\Xi(b(a))$ such that their values at the point $a = \dfrac{\pi}{4}$ coincide and
\begin{equation}
\label{condition}
\Omega(a) = \frac{p}{q}\pi\qquad \Leftrightarrow \qquad \Xi(b(a)) = \frac{r}{q}\pi.
\end{equation}
We claim that such two functions coincide. Indeed, let us introduce the following sets,
\begin{equation*}
\begin{split}
& A_\Omega(s) = \left\{\frac{p}{s}\pi\,|\,\, \Omega(0)<\frac{p}{s}\pi<\Omega\left(\frac{\pi}{4}\right),\,(p,s) = 1\right\} \\
& A_\Xi(s) = \left\{\frac{p}{s}\pi\,|\,\, \Xi(b(0))<\frac{p}{s}\pi<\Xi\left(b\left(\frac{\pi}{4}\right)\right),\,(p,s) = 1\right\}.
\end{split}
\end{equation*} 
On the one hand, condition (\ref{condition}) implies that $|A_\Omega(s)| = |A_\Xi(s)|$ for any $s$. On the other hand, suppose that $\Omega(0) \ne \Xi(b(0))$. Then for a sufficiently large $s$ one has $|A_\Omega(s)| \ne |A_\Xi(s)|$. This observation leads to a contradiction, hence $\Omega(0) = \Xi(b(0)) = \dfrac{1}{2}\pi$. Then $A_\Omega(s) = A_\Xi(s)$ and we denote this set simply by $A(s)$.
Let us consider the inverse functions $f = \Omega^{-1}$ and $g = (\Xi\,\circ\, b)^{-1}$. These functions are monotonous and continous on the interval 
$\left(\dfrac{1}{2}\pi,\dfrac{\sqrt{2}}{2}\pi\right)$. Condition (\ref{condition}) means that for any $s$ one has 
$f(A(s)) = g(A(s))$.
By monotonicity of $f$ and $g$, $f(x) = g(x)$ for any $x\in A(s)$. Therefore $f$ and $g$ coincide on the dense subset $\bigcup\limits_{s}A(s)$ of interval, hence by continuity $f(x)\equiv g(x)$ and $\Omega(a) = \Xi(b(a))$. 
Then since $\Omega(a) = \Xi(b(a))$, the functions $\cos\theta(t)$ and $\sin\theta(t)$ have $2p$ zeroes. 

Since each orbit has exactly two intersection points with the generalized space of orbits, the immersion $J_b$ is invariant under some tranformation if and only 
if the corresponding geodesic $\gamma=\tilde\pi(\tilde O_{\frac{p}{q}})$
 is invariant under the action of $\mathbb{Z}_2$ given by the formula (\ref{action}). 
This means that $\mathrm{Im}\gamma$ contains the point $(b,\pi)$. According to the first statement of this proposition,
if $(\varphi(t_1),\theta(t_1))=\gamma(t_1)$ and $\varphi(t_1) = b$ then $\theta(t_1) = \dfrac{2kp}{q}\pi$, where 
$k = 0,1,\ldots,q-1$. Hence, $\theta(t_1)$ can be equal to $(2l+1)\pi$ if and only if $q\equiv 0 \mod 2$ and $k = \dfrac{q}{2}$.
This implies that $t_1 = \dfrac{t_0}{2}$. Since the map $(\varphi,\theta)\mapsto(\varphi,\theta+\pi)$ is an isometry of the orbit 
space, $J_b$ is invariant under the transformation $(\alpha,t)\mapsto\left(\alpha+\pi,t+\dfrac{t_0}{2}\right)$.  
\end{proof}
\section{Proof of the Theorem}
\label{proof}
\subsection{Relation to the theory of periodic Sturm-Liouville problems.} In this section the eigenvalue counting problem for 
the Laplace-Beltrami operator on the bipolar 
Otsuki torus $\tilde O_{\frac{p}{q}}$ is reduced to the same problem for the periodic Sturm-Liouville operator.
\begin{prop} Let $\tilde O_{\frac{p}{q}}$ be a bipolar surface to an Otsuki torus $O_{\frac{p}{q}}$ parametrized 
by immersion $J_b(\alpha,t)$ as in Section~\ref{Properties}. Then the corresponding Laplace-Beltrami operator is given by the formula
\begin{equation}
\label{laplace}
\Delta f = -\frac{1}{\cos^2\varphi(t)}\frac{\partial^2 f}{\partial\alpha^2} - \frac{\partial}{\partial t}
\left( 4\pi^2\cos^2\varphi(t)\frac{\partial f}{\partial t} \right ).
\end{equation}
\end{prop}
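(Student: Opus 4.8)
The plan is to read off the first fundamental form induced by the immersion $J_b$ from formula~(\ref{bipolarimmersion2}) and to substitute it into the standard expression for the Laplace--Beltrami operator of a surface in orthogonal coordinates. First I would compute the partial derivatives $\partial_\alpha J_b$ and $\partial_t J_b$ and then the coefficients $E=\langle\partial_\alpha J_b,\partial_\alpha J_b\rangle$, $F=\langle\partial_\alpha J_b,\partial_t J_b\rangle$, $G=\langle\partial_t J_b,\partial_t J_b\rangle$ of the induced metric, the brackets denoting the Euclidean scalar product in $\mathbb{R}^5$. A short calculation gives $E=\cos^2\varphi(t)$, the $\theta$-dependent terms assembling into $\sin^2\theta+\cos^2\theta=1$. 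The key point to verify is that $F=0$: in $\langle\partial_\alpha J_b,\partial_t J_b\rangle$ the contribution of the $(x,y)$-block and that of the $(z,u)$-block each carry the factor $\sin\alpha\cos\alpha-\cos\alpha\sin\alpha=0$, while the $v$-component of $\partial_\alpha J_b$ vanishes identically, so the induced metric is diagonal in the coordinates $(\alpha,t)$.

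Next I would evaluate $G$. Once again the $\alpha$-dependence sums to $1$, and after expanding the squares the mixed terms proportional to $\dot\varphi\dot\theta$ cancel, leaving
\begin{equation*}
G=\dot\varphi^2\sin^2\varphi+\cos^2\varphi\,\dot\theta^2+\dot\varphi^2\cos^2\varphi=\dot\varphi^2+\cos^2\varphi\,\dot\theta^2 .
\end{equation*}
Here I would use the normalization fixed in Section~\ref{Properties}, namely that $t$ is a natural parameter on the geodesic $\tilde\pi(J_b)=(\varphi(t),\theta(t))$ with respect to the metric $\tilde g_1=4\pi^2\cos^2\varphi(d\varphi^2+\cos^2\varphi\,d\theta^2)$, so that $4\pi^2\cos^2\varphi(\dot\varphi^2+\cos^2\varphi\,\dot\theta^2)=1$ and hence $G=\dfrac{1}{4\pi^2\cos^2\varphi}$. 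Consequently the metric induced on $\tilde O_{\frac pq}$ by $J_b$ is $\cos^2\varphi(t)\,d\alpha^2+\dfrac{1}{4\pi^2\cos^2\varphi(t)}\,dt^2$; the same expression also follows by restricting the ambient metric~(\ref{tildeg1}) to the locus $\rho=0$, $\varphi=\varphi(t)$, $\theta=\theta(t)$.

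Finally I would substitute $E=\cos^2\varphi$, $F=0$, $G=(4\pi^2\cos^2\varphi)^{-1}$ into the orthogonal-coordinate form of the Laplacian,
\begin{equation*}
\Delta f=-\frac{1}{\sqrt{EG}}\left[\frac{\partial}{\partial\alpha}\!\left(\sqrt{\frac GE}\,\frac{\partial f}{\partial\alpha}\right)+\frac{\partial}{\partial t}\!\left(\sqrt{\frac EG}\,\frac{\partial f}{\partial t}\right)\right].
\end{equation*}
Here $\sqrt{EG}=\tfrac1{2\pi}$ is constant, $\sqrt{G/E}=\tfrac1{2\pi\cos^2\varphi}$ and $\sqrt{E/G}=2\pi\cos^2\varphi$; since $\varphi$ is independent of $\alpha$ the first term reduces to $\tfrac1{2\pi\cos^2\varphi}\,\partial^2 f/\partial\alpha^2$, and collecting the factors of $2\pi$ produces exactly formula~(\ref{laplace}). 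The argument is a routine computation; the only places that genuinely require care are the vanishing of the off-diagonal coefficient $F$ and the correct use of the arc-length normalization of $t$ — without the latter one does not obtain the clean coefficient $4\pi^2\cos^2\varphi(t)$.
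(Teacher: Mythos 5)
Your proposal is correct and follows essentially the same route as the paper: both arrive at the induced metric $h=\cos^2\varphi(t)\,d\alpha^2+\dfrac{1}{4\pi^2\cos^2\varphi(t)}\,dt^2$ (the paper by restricting the ambient metric~(\ref{tildeg1}) to $\rho=0$, you by computing $E,F,G$ from the immersion — the same computation in substance, as you note yourself) and then invoke the arc-length normalization and the standard orthogonal-coordinates formula for the Laplacian. You merely carry out explicitly the ``direct calculation'' that the paper leaves to the reader.
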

\begin{proof} The metric on the sphere $\mathbb{S}^4$ is given by formula (\ref{tildeg1}). Since 
$\rho = 0$, the metric on $\tilde O_{\frac{p}{q}}$ is given by the formula
$$
\cos^2\varphi(t)d\alpha^2 +(\dot\varphi(t)^2 + \dot\theta(t)^2\cos^2\varphi(t))dt^2.
$$ 
But the length of the velocity vector of $\tilde\pi(O_{\frac{p}{q}})$ is equal to 1, therefore,
$$
4\pi^2\cos^2\varphi(t)(\dot\varphi(t)^2 + \dot\theta(t)^2\cos^2\varphi(t)) = 1.
$$
Hence the metric on $\tilde O_{\frac{p}{q}}$ equals to
\begin{equation}
\label{h}
h = \cos^2\varphi(t)d\alpha^2 + \frac{1}{4\pi^2\cos^2\varphi(t)}dt^2
\end{equation}
and formula (\ref{laplace}) could be obtained by a direct calculation.
\end{proof}
\begin{prop} A\label{prop1} number $\lambda$ is an eigenvalue of $\Delta$ if and only if there exists $l \in \mathbb{Z}_{\geqslant 0}$
and an eigenvalue $\lambda(l)$ of the following periodic Sturm-Liouville problem
\begin{equation}
\label{sturm}
\begin{split}
&\frac{d}{dt}\left(4\pi^2\cos^2\varphi(t)\frac{dh(t)}{dt}\right) + \left(\lambda - \frac{l^2}{\cos^2\varphi(t)}\right)h(t) = 0, \\
&h(t+t_0) \equiv h(t),
\end{split}
\end{equation}
such that $\lambda(l) = \lambda$.
\end{prop}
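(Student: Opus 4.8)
The plan is to separate variables. The essential point is that the coefficients of $\Delta$ in formula (\ref{laplace}) depend only on $t$, so $\Delta$ commutes with the $SO(2)$-action $\alpha\mapsto\alpha+c$ on the torus. I would therefore expand a function $f$ on $\tilde O_{\frac{p}{q}}$ into Fourier modes in $\alpha$, $f(\alpha,t)=\sum_{l\in\mathbb Z}e^{il\alpha}h_l(t)$ with $h_l(t)=\frac1{2\pi}\int_0^{2\pi}f(\alpha,t)e^{-il\alpha}\,d\alpha$. A direct computation from (\ref{laplace}) gives
\[
\Delta\bigl(e^{il\alpha}h(t)\bigr)=e^{il\alpha}\left(-\frac{d}{dt}\Bigl(4\pi^2\cos^2\varphi(t)\,\frac{dh}{dt}\Bigr)+\frac{l^2}{\cos^2\varphi(t)}\,h(t)\right),
\]
so $\Delta$ preserves each Fourier mode and acts on it by the operator on the left-hand side of (\ref{sturm}).

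For the forward implication I would argue as follows. Suppose $\Delta f=\lambda f$ with $f\not\equiv 0$. By the elliptic regularity recalled in the Introduction $f$ is smooth, so every $h_l$ is a smooth $t_0$-periodic function of $t$ and at least one of them is not identically zero. Multiplying $\Delta f=\lambda f$ by $e^{-il\alpha}$ and integrating over $\alpha\in[0,2\pi)$ — integrating by parts twice in $\alpha$, the boundary terms vanishing by periodicity, and commuting the $\alpha$-integral past the $t$-derivatives — shows that this $h_l$ solves (\ref{sturm}). Since the equation involves $l$ only through $l^2$, we may take $l\in\mathbb Z_{\geqslant 0}$, and then $\lambda=\lambda(l)$ is an eigenvalue of the periodic Sturm-Liouville problem (\ref{sturm}). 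For the converse, if $\lambda=\lambda(l)$ is an eigenvalue of (\ref{sturm}) with nonzero $t_0$-periodic eigenfunction $h(t)$, then $e^{il\alpha}h(t)$ — equivalently the real functions $\cos(l\alpha)h(t)$ and $\sin(l\alpha)h(t)$ — is a well-defined nonzero function on the torus satisfying $\Delta\bigl(e^{il\alpha}h\bigr)=\lambda e^{il\alpha}h$, so $\lambda$ is an eigenvalue of $\Delta$. Here I would also record that (\ref{sturm}) is a genuine regular periodic Sturm-Liouville problem: the coefficient $4\pi^2\cos^2\varphi(t)$ is smooth and, since $\varphi(t)$ stays in $[b,-b]\subset\left(-\frac{\pi}{2},\frac{\pi}{2}\right)$ along the geodesic, it is bounded below by $4\pi^2\cos^2 b>0$; hence its spectrum is discrete, bounded below and accumulates only at $+\infty$.

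The step requiring genuine care is the completeness claim implicit in the forward direction: that every eigenfunction of $\Delta$ is obtained by this separation, so that the spectrum of $\Delta$ is exactly the union over $l$ of the Sturm-Liouville spectra and nothing more. I would justify it by the representation-theoretic remark that $\Delta$ has discrete spectrum with finite-dimensional eigenspaces, each eigenspace is invariant under the $SO(2)$-action in $\alpha$, and a finite-dimensional representation of $SO(2)$ splits into characters; hence each eigenspace is spanned by functions of the form $e^{il\alpha}h_l(t)$, which reduces the spectral problem for $\Delta$ to the family (\ref{sturm}) indexed by $l\in\mathbb Z_{\geqslant 0}$. The remaining ingredients are then the classical Sturm-Liouville theory and the explicit formula (\ref{laplace}).
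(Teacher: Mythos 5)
Your proposal is correct and follows essentially the same route as the paper: separation of variables in $\alpha$ using the fact that $\Delta$ commutes with $\partial/\partial\alpha$, reducing to the periodic Sturm--Liouville family (\ref{sturm}) indexed by $l$. The paper's proof is just a terser version of this, asserting directly that $\Delta$ has a basis of eigenfunctions of the form $h(l,t)\cos(l\alpha)$ and $h(l,t)\sin(l\alpha)$; your explicit justification of that completeness step via the $SO(2)$-representation on each finite-dimensional eigenspace is a welcome addition but not a different argument.
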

\begin{proof} Let us remark that $\Delta$ commutes with $\dfrac{\partial}{\partial\alpha}$. It follows that $\Delta$ has a basis 
of eigenfunctions of the form $h(l,t)\cos(l\alpha)$ and $h(l,t)\sin(l\alpha)$. Substituting these eigenfunctions into the formula 
$\Delta f = \lambda f$ one obtains equation (\ref{sturm}). Since $f(\alpha+2\pi,t) \equiv f(\alpha, t+t_0) \equiv 
f(\alpha,t)$, one has $l\in\mathbb{Z}$ and the boundary condition in formula (\ref{sturm}).
\end{proof}

The equation (\ref{sturm}) is written in the classical form of the periodic Sturm-Liouville problem, and the following
 proposition holds, see e.g. book~\cite{KoddingtonLevinson}.
\begin{prop} Consider a periodic Sturm-Liouville problem in the form 
\begin{equation}
\label{Rayleigh}
-\frac{d}{dt}\left(p(t)\frac{d}{dt}h(t)\right) + q(t)h(t) = \lambda h(t),
\end{equation}
where $p(t),q(t)>0$ and $p(t+t_0)\equiv p(t),q(t+t_0)\equiv q(t)$. Let us denote by 
$\lambda_i$ and $h_i(t)$ ($i = 0,1,2,\ldots$) the eigenvalues and eigenfunctions of the problem (\ref{Rayleigh})
with the periodic boundary conditions
\begin{equation}
\label{Periodic}
h(t+t_0)\equiv h(t).
\end{equation}
Let us also denote by $\tilde\lambda_i$ and $\tilde h_i(t)$ ($i=1,2,\ldots$) the eigenvalues and 
eigenfunctions of the problem (\ref{Rayleigh}) with antiperiodic boundary conditions
\begin{equation}
\label{Antiperiodic}
h(t+t_0)\equiv -h(t).
\end{equation}
Then the following inequalities hold, 
$$
\lambda_0<\tilde\lambda_1\leqslant\tilde\lambda_2<\lambda_1\leqslant\lambda_2<\tilde\lambda_3\leqslant\tilde\lambda_4<\lambda_3\leqslant\lambda_4<\ldots
$$
For $\lambda = \lambda_0$ there exists a unique (up to multiplication by a non-zero constant) eigenfunction $h_0(t)$.
If $\lambda_{2i+1}<\lambda_{2i+2}$ for $i\geqslant 0$ there is a unique (up to multiplication by a non-zero constant) eigenfunction $h_{2i+1}(t)$ with eigenvalue
$\lambda_{2i+1}$ of multiplicity 1 and there is a unique (up to multiplication by a non-zero constant) eigenfunction $h_{2i+2}(t)$ with eigenvalue $\lambda_{2i+1}$
of multiplicity one. If $\lambda_{2i+1} = \lambda_{2i+2}$ then there is two-dimensional eigenspace spaned by
$h_{2i+1}(t)$ and $h_{2i+2}(t)$ with eigenvalue $\lambda = \lambda_{2i+1} = \lambda_{2i+2}$
 of multiplicity two. The same holds in case $\tilde\lambda_{2i+1}<\tilde\lambda_{2i+2}$ 
and $\tilde\lambda_{2i+1} = \tilde\lambda_{2i+2}$\label{SturmProp}

The eigenfunction $h_0(t)$ has no zeros on $[0,t_0)$. The eigenfunctions $h_{2i+1}(t)$ 
and $h_{2i+2}(t)$ each have
exactly $2i+2$ zeros on $[0,t_0)$. The eigenfunctions $\tilde h_{2i+1}(t)$ 
and $\tilde h_{2i+2}(t)$ each have exactly $2i+1$ zeros on $[0,t_0)$. 
\end{prop}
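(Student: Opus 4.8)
The plan is to treat Proposition~\ref{SturmProp} as the classical oscillation theorem for Hill's equation and to prove it by combining the analysis of the period--monodromy (transfer) matrix with the Pr\"ufer transformation. First I would fix the fundamental system $\phi_1(t,\lambda),\phi_2(t,\lambda)$ of \eqref{Rayleigh} normalized by $\phi_1(0,\lambda)=1$, $p(0)\phi_1'(0,\lambda)=0$, $\phi_2(0,\lambda)=0$, $p(0)\phi_2'(0,\lambda)=1$. By Abel's identity the transfer matrix over one period has determinant $1$, so its trace $D(\lambda)=\phi_1(t_0,\lambda)+p(t_0)\phi_2'(t_0,\lambda)$ (the discriminant) is the only relevant Floquet invariant: a nontrivial solution with multiplier $\mu$ exists iff $\mu^2-D(\lambda)\mu+1=0$, hence $\lambda$ satisfies the periodic condition \eqref{Periodic} iff $D(\lambda)=2$ and the antiperiodic condition \eqref{Antiperiodic} iff $D(\lambda)=-2$. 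The function $D$ is entire in $\lambda$, and a standard estimate of $\phi_1,\phi_2$ as $\lambda\to-\infty$ (where $q-\lambda$ is large and positive, so both solutions grow exponentially and $\phi_1,p\phi_2'>0$) gives $D(\lambda)\to+\infty$.

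Next I would establish the interlacing $\lambda_0<\tilde\lambda_1\leqslant\tilde\lambda_2<\lambda_1\leqslant\lambda_2<\tilde\lambda_3\leqslant\cdots$ by analysing the graph of $D$ relative to the lines $D=\pm2$. The key local fact is that if $D(\lambda_*)=2$ and the periodic eigenspace at $\lambda_*$ is one--dimensional, then $D'(\lambda_*)\neq0$; this follows from the Wronskian identity $\frac{d}{d\lambda}\bigl(p(\partial_\lambda\phi_i\cdot\phi_j'-\partial_\lambda\phi_i'\cdot\phi_j)\bigr)=-\phi_i\phi_j$ integrated over one period, which expresses $D'(\lambda_*)$ as a nonzero multiple of $-\int_0^{t_0}\psi(t)^2\,dt$ for the periodic Floquet solution $\psi$; the same argument applies at $D=-2$. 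Consequently $D$ is tangent to $D=\pm2$ only at double eigenvalues. A continuity argument then shows $D$ descends from $+\infty$, meets $2$ first at $\lambda_0$ (which is simple, since the lowest periodic eigenfunction has no zeros and spans a one--dimensional space), enters the band $[-2,2]$, reaches $-2$ at $\tilde\lambda_1\leqslant\tilde\lambda_2$, returns to $2$ at $\lambda_1\leqslant\lambda_2$, and so on; the alternation of the spectral gaps is forced because after a transversal crossing of one line $D$ cannot return to that same line without first reaching the other.

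For the nodal counts I would use the Pr\"ufer transformation $h=r\sin\vartheta$, $p h'=r\cos\vartheta$, under which \eqref{Rayleigh} becomes the scalar equation $\vartheta'=p(t)^{-1}\cos^2\vartheta+(\lambda-q(t))\sin^2\vartheta$, whose right--hand side is strictly increasing in $\lambda$ wherever $\sin\vartheta\neq0$. By the comparison theorem for scalar ODEs the rotation function $\vartheta(t_0,\lambda)-\vartheta(0,\lambda)$ is non--decreasing and continuous in $\lambda$, tends to $0$ as $\lambda\to-\infty$ and to $+\infty$ as $\lambda\to+\infty$, and each zero of $h$ on $[0,t_0)$ corresponds to $\vartheta$ crossing a multiple of $\pi$. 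Tracking when the Floquet solution is periodic (total rotation an even multiple of $\pi$) or antiperiodic (an odd multiple of $\pi$) yields that $h_0$ has no zero, that $h_{2i+1},h_{2i+2}$ each have exactly $2i+2$ zeros, and that $\tilde h_{2i+1},\tilde h_{2i+2}$ each have exactly $2i+1$ zeros; the multiplicity statements follow from whether both independent solutions at such a $\lambda$ share the boundary behaviour (coexistence, double eigenvalue) or only one does (simple eigenvalue).

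The main obstacle is the coexistence/multiplicity bookkeeping: proving that each line $D=\pm2$ is met by $D$ in precisely the pattern allowed by $\leqslant$ (two simple eigenvalues straddling an open stability gap, or one double eigenvalue with the gap closed) and never three times between consecutive eigenvalues of the other type, and then matching the resulting enumeration with the Pr\"ufer rotation number so that the index and the zero count agree. This rests on the sign of $D'$ at simple eigenvalues from the Wronskian identity above, on the fact that $D\mp2$ has a zero of even order exactly at a double eigenvalue, and on the elementary but delicate observation that both conditions in \eqref{Periodic} (resp.\ \eqref{Antiperiodic}) hold simultaneously only in the coexistence case. All of this is classical, and the statement may alternatively be quoted directly from~\cite{KoddingtonLevinson}; the sketch above is included only to keep the exposition self--contained.
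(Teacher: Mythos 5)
The paper offers no proof of this proposition at all: it states the result as classical and refers the reader to Coddington--Levinson, which is exactly the source whose discriminant-plus-Pr\"ufer argument your sketch reconstructs, so your approach is essentially the same as the paper's (namely, quoting the classical oscillation theorem for Hill's equation). Your outline is correct, with one typographical slip worth fixing: the Wronskian identity you invoke for $D'(\lambda_*)$ should read $\frac{d}{dt}\bigl(p(\partial_\lambda\phi_i\cdot\phi_j'-\partial_\lambda\phi_i'\cdot\phi_j)\bigr)=-\phi_i\phi_j$, i.e.\ the outer derivative is in $t$ (one then integrates over the period), not in $\lambda$.
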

\begin{corollary} Let $h_i(l,t)$ and $\lambda_i(l)$ be the $i$-th eigenfunction and the $i$-th eigenvalue of problem~(\ref{sturm}) 
for a fixed $l$.
Then the eigenspace of the Laplace-Beltrami operator $\Delta$ with eigenvalue $\lambda$ has a basis consisting
of functions of the form\label{basis}
$$
h_i(l,t)\cos(l\alpha),
$$
where $l\in\mathbb{Z}_{\geqslant 0}$ and there exists $i$ such that $\lambda_i(l) = \lambda$, and
$$
h_i(l,t)\sin(l\alpha),
$$
where $l\in\mathbb{N}$ and there exists $i$ such that $\lambda_i(l) = \lambda$.
\end{corollary}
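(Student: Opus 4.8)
The plan is to deduce the corollary from the separation of variables carried out in the proof of Proposition~\ref{prop1}, combined with two completeness statements: completeness of the trigonometric system in the variable $\alpha$, and completeness of the Sturm--Liouville eigenfunctions $h_i(l,\cdot)$ in the variable $t$. In other words, I would show that the joint diagonalization of $\Delta$ and $\partial/\partial\alpha$ already produces a full orthogonal basis of $L^2$ consisting of the products listed in the statement, after which reading off a fixed eigenspace is immediate.

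First I would fix the Hilbert-space picture. By~(\ref{h}) the Riemannian volume form of $\tilde O_{\frac{p}{q}}$ in the coordinates $(\alpha,t)$ is a constant multiple of $d\alpha\,dt$, so $L^2\bigl(\tilde O_{\frac{p}{q}}\bigr)$ is naturally identified with $L^2\bigl([0,2\pi)\times[0,t_0)\bigr)$. Since the coefficients of $\Delta$ in~(\ref{laplace}) do not involve $\alpha$, the operator $\Delta$ commutes with $\partial/\partial\alpha$; as $\cos(l\alpha)$ and $\sin(l\alpha)$ are eigenfunctions of $\partial^2/\partial\alpha^2$ with eigenvalue $-l^2$, the Fourier expansion in $\alpha$ exhibits $L^2(\tilde O_{\frac{p}{q}})$ as the orthogonal sum of the $\Delta$-invariant subspaces $H_l^{\cos}=\{h(t)\cos(l\alpha)\}$ for $l\in\mathbb Z_{\geqslant 0}$ and $H_l^{\sin}=\{h(t)\sin(l\alpha)\}$ for $l\in\mathbb N$, with $h$ ranging over $L^2([0,t_0))$.

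Next I would identify $\Delta$ on a single summand with the Sturm--Liouville operator. Substituting $f=h(t)\cos(l\alpha)$ into $\Delta f=\lambda f$ and using~(\ref{laplace}) reduces the eigenvalue equation on $H_l^{\cos}$ to precisely the periodic Sturm--Liouville problem~(\ref{sturm}) for $h$ — this is the computation already made in the proof of Proposition~\ref{prop1}, and it applies verbatim to $H_l^{\sin}$. Hence $\Delta|_{H_l^{\cos}}$ has eigenvalues $\lambda_i(l)$, $i\geqslant 0$, with eigenfunctions $h_i(l,t)\cos(l\alpha)$, and by Proposition~\ref{SturmProp} the system $\{h_i(l,\cdot)\}_{i\geqslant 0}$ is complete and orthogonal in $L^2([0,t_0))$, so $\{h_i(l,t)\cos(l\alpha)\}_{i\geqslant 0}$ is an orthogonal basis of $H_l^{\cos}$; likewise $\{h_i(l,t)\sin(l\alpha)\}_{i\geqslant 0}$ is an orthogonal basis of $H_l^{\sin}$.

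Finally I would read off the $\lambda$-eigenspace. If $\Delta f=\lambda f$, the component of $f$ in each $H_l^{\cos}$ (resp. $H_l^{\sin}$) is again a $\lambda$-eigenfunction or zero, hence lies in the span of those $h_i(l,t)\cos(l\alpha)$ (resp. $h_i(l,t)\sin(l\alpha)$) with $\lambda_i(l)=\lambda$; conversely each such product is a $\lambda$-eigenfunction of $\Delta$ by Proposition~\ref{prop1}. Vectors from different summands are mutually orthogonal, and within a fixed summand the $h_i(l,\cdot)$ with $\lambda_i(l)=\lambda$ are orthonormal (at most two such indices occur, by Proposition~\ref{SturmProp}), so the indicated family is linearly independent and spans the eigenspace, which is the assertion. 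No real obstacle arises here; the only point requiring care is to invoke both completeness statements at once, so that the direct-sum decomposition of $L^2$ — and with it every finite-dimensional eigenspace — captures all modes and nothing is missed.
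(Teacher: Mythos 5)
Your argument is correct and takes essentially the same route as the paper, whose proof is the one-line remark that the claim follows from Propositions~\ref{prop1} and~\ref{SturmProp} for fixed $l$; you have merely made explicit the standard completeness and orthogonality facts (Fourier decomposition in $\alpha$, completeness of the periodic Sturm--Liouville eigenfunctions) that the paper leaves implicit.
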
    
\begin{proof} The statement follows from Propositions \ref{prop1} and \ref{SturmProp} for a fixed $l$.
\end{proof}
\subsection{Rayleigh quotient.}
Let us now investigate properties of eigenvalues $\lambda_i(l)$ as functions of $l$. One of the most efficient tools 
for this investigation is a Rayleigh quotient. The Rayleigh quotient for the problem~(\ref{Rayleigh}) is defined by 
the following formula,
$$
R[v] = \frac{\int_0^{t_0}p(t)\dot v^2 + q(t)v^2 dt}{\int_0^{t_0}v^2 dt}. 
$$

The following proposition can be found e.g. in the book~\cite{Henrot}. 
\begin{prop}[Variational principle] For the eigenvalue $\lambda_0$ of the problem (\ref{Rayleigh}) with the boundary 
condition (\ref{Periodic}) one has 
$$
\lambda_0 = \inf_v R[v], 
$$
where infimum is taken over the space of $t_0$-periodic functions $v\in H^1$.

For the first eigenvalue $\tilde\lambda_1$ of the problem (\ref{Rayleigh}) with the boundary condition (\ref{Antiperiodic})
 one has
$$
\tilde\lambda_1 = \inf_u R[u],
$$
where infimum is taken over the space of $t_0$-antiperiodic functions $v\in H^1$.
\end{prop}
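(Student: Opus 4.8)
The plan is to derive this from the standard min--max (Courant--Fischer) characterization of the bottom of the spectrum, applied separately to the two self-adjoint realizations of the operator $L = -\frac{d}{dt}\bigl(p(t)\frac{d}{dt}\,\cdot\,\bigr) + q(t)$, once with periodic and once with antiperiodic conditions. First I would fix the functional-analytic framework in the periodic case: on the form domain $H^1_{per}$ of $t_0$-periodic $H^1$ functions consider the symmetric bilinear form $a(u,v) = \int_0^{t_0}\bigl(p\,\dot u\,\dot v + q\,uv\bigr)\,dt$, so that $R[v] = a(v,v)/\|v\|_{L^2}^2$. Since $p$ and $q$ are continuous and strictly positive on the compact circle $\mathbb{R}/t_0\mathbb{Z}$, they are bounded between positive constants; hence $R[v]$ is finite for every nonzero $v\in H^1_{per}$, and $a$ is continuous and coercive, $a(u,u)\geqslant c\,\|u\|_{H^1}^2$ for some $c>0$. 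By Lax--Milgram (equivalently, the Friedrichs extension) $a$ defines a self-adjoint operator, and by the Rellich--Kondrachov theorem the embedding $H^1_{per}\hookrightarrow L^2_{per}$ is compact, so this operator has compact resolvent and therefore a discrete spectrum $\lambda_0\leqslant\lambda_1\leqslant\cdots\to+\infty$ with an $L^2_{per}$-orthonormal eigenbasis $(h_i)$. A standard ODE regularity (bootstrapping) argument shows each $h_i$ is smooth and solves (\ref{Rayleigh}) classically with the boundary condition (\ref{Periodic}), so this abstract spectrum coincides with the one of Proposition~\ref{SturmProp}.

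Granting this, the formula $\lambda_0 = \inf_v R[v]$ is immediate: expanding $v = \sum_i c_i h_i$ in the eigenbasis gives $R[v] = \frac{\sum_i \lambda_i c_i^2}{\sum_i c_i^2}\geqslant\lambda_0$, with equality when $v = h_0\in H^1_{per}$, so the infimum is attained and equals $\lambda_0$. Conversely, a minimizer of $R$ over $H^1_{per}$ satisfies the Euler--Lagrange equation weakly, hence is a classical eigenfunction with eigenvalue equal to its Rayleigh quotient, which rules out any value below $\lambda_0$.

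For $\tilde\lambda_1$ I would repeat the argument verbatim on the antiperiodic space $H^1_{anti}$, the closed subspace of $H^1_{loc}(\mathbb{R})$ consisting of functions with $u(t+t_0)\equiv -u(t)$ (equivalently, $H^1$ sections of the flat real line bundle over $\mathbb{R}/t_0\mathbb{Z}$ with holonomy $-1$). Because $p$ and $q$ are genuinely $t_0$-periodic, a short direct computation shows $L$ preserves this condition; the form $a$, still integrated over $[0,t_0]$ as in the definition of $R$, is symmetric, continuous and coercive on $H^1_{anti}$, and the embedding $H^1_{anti}\hookrightarrow L^2_{anti}$ is again compact. One thus obtains a discrete spectrum $\tilde\lambda_1\leqslant\tilde\lambda_2\leqslant\cdots$, indexed from $1$ to match Proposition~\ref{SturmProp} (the antiperiodic ground state already has a zero), with an orthonormal eigenbasis, and the same eigenexpansion of the Rayleigh quotient gives $\tilde\lambda_1 = \inf_u R[u]$ over $t_0$-antiperiodic $u\in H^1$, attained at the first antiperiodic eigenfunction.

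The content here is not in the algebra but in two soft analytic inputs, which is where I would concentrate the effort: \emph{coercivity} of $a$ and \emph{compactness} of the Sobolev embedding, since without them the spectrum need not be discrete and the infimum need not be attained --- strict positivity of $p$ and $q$ together with compactness of the underlying circle are precisely what make both hold. The only other point requiring care is fitting the antiperiodic problem into the same template, handled above by viewing it as an eigenvalue problem for $L$ on sections of the nontrivial real line bundle; once that is set up, the regularity theory and the Rayleigh-quotient computation carry over without change.
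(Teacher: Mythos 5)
The paper does not prove this proposition at all: it is quoted as a standard fact with a reference to Henrot's book, so there is no in-paper argument to compare against. Your proof is the standard variational characterization of the bottom of the spectrum and it is sound: closed, symmetric, bounded-below form on $H^1_{per}$ (resp.\ $H^1_{anti}$), Friedrichs extension, Rellich--Kondrachov compactness giving a discrete spectrum and an orthonormal eigenbasis, eigenfunction expansion of $R[v]$ to get the lower bound, attainment at the ground state, and elliptic/ODE regularity to identify the abstract eigenfunctions with the classical ones of Proposition~\ref{SturmProp}. Your treatment of the antiperiodic case as the same form on sections of the flat line bundle with holonomy $-1$ is the right way to make the two cases uniform. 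Two small remarks. First, full $H^1$-coercivity $a(u,u)\geqslant c\|u\|_{H^1}^2$ uses $q\geqslant\mathrm{const}>0$; this is consistent with the hypothesis $q(t)>0$ as stated, but in the paper's actual application with $l=0$ one has $q\equiv 0$, and then constants satisfy $a(u,u)=0$. The argument survives because all one really needs is that the form is bounded below and closed, i.e.\ $a(u,u)+\|u\|_{L^2}^2\geqslant c\|u\|_{H^1}^2$, which holds for $q\geqslant 0$; it would be worth phrasing the coercivity step that way so the proposition covers the case it is used for. Second, when you write $a(v,v)=\sum_i\lambda_i c_i^2$ for $v\in H^1_{per}$ you are implicitly identifying $H^1_{per}$ with the form domain of the Friedrichs extension; this is true here (the form norm is equivalent to the $H^1$ norm), but it is the one identification that deserves an explicit sentence rather than being absorbed into ``standard.''
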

\begin{corollary} For any smooth $t_0$-periodic function $f$ one has the inequality
\label{minmax}
$$
\lambda_0 \leqslant R[f].
$$
For any smooth $t_0$-antiperiodic function $g$ one has the inequality
$$
\tilde\lambda_1 \leqslant R[g].
$$ 
\end{corollary}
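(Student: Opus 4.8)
The plan is to read this off immediately from the variational principle stated just above. A smooth $t_0$-periodic function $f$ (implicitly $f\not\equiv 0$, so that the Rayleigh quotient $R[f]$ is defined) is in particular a $t_0$-periodic function of class $H^1$ on the circle $\mathbb{R}/(t_0\mathbb{Z})$. Hence $f$ lies in the space over which the infimum in $\lambda_0=\inf_v R[v]$ is taken, and therefore $\lambda_0\leqslant R[f]$. The second inequality is obtained in exactly the same way: a smooth $t_0$-antiperiodic function $g$ belongs to the space of $t_0$-antiperiodic $H^1$ functions, so $\tilde\lambda_1=\inf_u R[u]\leqslant R[g]$.

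There is essentially no obstacle here; the statement is a convenient restatement of the variational principle, tailored so that later one may bound $\lambda_0$ and $\tilde\lambda_1$ — and hence, via Proposition~\ref{SturmProp}, the eigenvalues $\lambda_i(l)$ of problem~(\ref{sturm}) — by substituting explicit test functions. The only facts invoked are the inclusion $C^\infty(\mathbb{R}/(t_0\mathbb{Z}))\subset H^1(\mathbb{R}/(t_0\mathbb{Z}))$ and the definition of an infimum. Accordingly, I would simply cite the preceding proposition and note that smooth periodic (resp. antiperiodic) functions are admissible in the corresponding infima.
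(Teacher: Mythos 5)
Your proof is correct and matches the paper's (implicit) argument: the corollary is an immediate consequence of the variational principle, since smooth $t_0$-periodic (resp.\ antiperiodic) functions are admissible test functions in the corresponding infima. The paper itself states this corollary without proof for exactly this reason.
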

\begin{corollary} For the family of the periodic Sturm-Liouville problems (\ref{sturm}) one has $\lambda_0(l) > \lambda_0(l')$
as long\label{monotonicity} as $l>l'$. 
\end{corollary}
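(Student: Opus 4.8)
The plan is to deduce this directly from the variational characterization of the lowest eigenvalue, using that the potential term in (\ref{sturm}) is pointwise strictly increasing in $l$.

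First I would rewrite (\ref{sturm}) in the form (\ref{Rayleigh}) with $p(t) = 4\pi^2\cos^2\varphi(t)$ and $q_l(t) = l^2/\cos^2\varphi(t)$, and record that $\varphi(t)$ ranges over $[b,-b]$ with $b\in\left(-\frac{\pi}{2},0\right)$, so that $\cos\varphi(t)\geqslant\cos b>0$ on $[0,t_0]$. Hence $p$ and $q_l$ are smooth, $t_0$-periodic, and (for $l\geqslant 1$) strictly positive, so the Variational principle applies and gives
$$
\lambda_0(l)=\inf_v R_l[v],\qquad R_l[v]=\frac{\int_0^{t_0}4\pi^2\cos^2\varphi(t)\,\dot v^2+\frac{l^2}{\cos^2\varphi(t)}v^2\,dt}{\int_0^{t_0}v^2\,dt},
$$
the infimum being attained at the ground-state eigenfunction $v_l$ of (\ref{sturm}) for the given $l$.

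Then, for $l>l'$ and any $v\not\equiv 0$, one has the elementary identity
$$
R_l[v]-R_{l'}[v]=\bigl(l^2-(l')^2\bigr)\,\frac{\int_0^{t_0}v^2/\cos^2\varphi(t)\,dt}{\int_0^{t_0}v^2\,dt}>0,
$$
since $l^2-(l')^2>0$ and $\cos^2\varphi(t)>0$ forces the numerator to be strictly positive. Applying this with $v=v_l$ yields $\lambda_0(l)=R_l[v_l]>R_{l'}[v_l]\geqslant\inf_v R_{l'}[v]=\lambda_0(l')$, which is exactly the assertion. For the borderline case $l'=0$, where $q_0\equiv 0$ is merely nonnegative, one instead uses directly that $\lambda_0(0)=0$ (attained by constants), or remarks that the variational characterization of $\lambda_0$ remains valid for $q\geqslant 0$.

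I do not expect a real obstacle here: the heart of the argument is the one-line monotonicity of the Rayleigh quotient above. The only points that require attention are checking that $\cos\varphi(t)$ is bounded away from zero (so that the coefficients are admissible and the ground-state infimum is actually attained, allowing strict inequality) and the minor bookkeeping at $l'=0$, where $q_0$ is not strictly positive.
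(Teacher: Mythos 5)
Your proposal is correct and is exactly the argument the paper intends: the corollary is stated immediately after the variational principle precisely because $\lambda_0(l)=\inf_v R_l[v]$ and the Rayleigh quotient is pointwise strictly increasing in $l$ through the potential term $l^2/\cos^2\varphi(t)$. Your handling of the strict inequality via attainment at the ground state (and the remark about $l'=0$) is fine; one could even avoid attainment by noting $\int v^2/\cos^2\varphi\,dt\geqslant\int v^2\,dt$, which gives $\lambda_0(l)\geqslant\lambda_0(l')+(l^2-l'^2)$ directly.
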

\begin{prop} The following inequality holds,
$$
\lambda_0(2)>2.
$$
\label{lambda0(2)}
\end{prop}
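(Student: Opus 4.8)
The goal is to show $\lambda_0(2)>2$, where $\lambda_0(2)$ is the lowest eigenvalue of the periodic Sturm-Liouville problem \eqref{sturm} with $l=2$. By Corollary~\ref{monotonicity} the function $l\mapsto\lambda_0(l)$ is strictly increasing, so it would suffice to control $\lambda_0(1)$ and then push past $2$; but more directly, I would use the variational characterization from Corollary~\ref{minmax}: for \emph{any} smooth $t_0$-periodic test function $f$,
$$
\lambda_0(2)\leqslant R[f]=\frac{\int_0^{t_0}\bigl(4\pi^2\cos^2\varphi(t)\,\dot f^2+\frac{4}{\cos^2\varphi(t)}f^2\bigr)\,dt}{\int_0^{t_0}f^2\,dt}
$$
goes the wrong way — so instead I need a genuine lower bound for $\lambda_0(2)$, not an upper bound. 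The right tool is therefore \emph{not} a test function for $\lambda_0(2)$ itself, but a comparison: I would exhibit an explicit positive periodic function that is a \emph{subsolution}, or more cleanly, produce a lower bound on the Rayleigh quotient valid for all admissible $v$.

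**Key steps.** First I would recall from Theorem~\ref{th2} that, since $\tilde O_{p/q}$ is minimally immersed in $\mathbb{S}^4\subset\mathbb{R}^5$, the five coordinate restrictions $x,y,z,u,v$ are Laplace eigenfunctions with eigenvalue $\dim\tilde O_{p/q}=2$. Looking at the parametrization \eqref{bipolarimmersion2}: $x=\cos\alpha\cos\varphi(t)\sin\theta(t)$, $z=\cos\alpha\cos\varphi(t)\cos\theta(t)$ involve $\cos\alpha$, hence correspond to $l=1$; similarly $y,u$ give $l=1$ with $\sin\alpha$; and $v=\sin\varphi(t)$ is $\alpha$-independent, so it is an eigenfunction of \eqref{sturm} with $l=0$ and eigenvalue $2$. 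Thus $\lambda=2$ genuinely appears in the spectrum of $\Delta$, realized at $l=0$ and $l=1$. The crucial point is that it does \emph{not} appear at $l=2$: I want $\lambda_0(2)>2$, which by the interlacing/monotonicity of Corollary~\ref{monotonicity} (applied with $l=2>1=l'$) gives $\lambda_0(2)>\lambda_0(1)$, so it is enough to show $\lambda_0(1)\geqslant 2$. And $\lambda_0(1)\geqslant 2$ should follow because the $l=1$ eigenfunction $h(1,t)=\cos\varphi(t)\sin\theta(t)$ (coming from the coordinate $x$) has eigenvalue $2$; if I can show this $h(1,t)$ is, up to a constant, \emph{one-signed}, then by the last statement of Proposition~\ref{SturmProp} (the groundstate $h_0$ is the unique nodeless eigenfunction) it must equal $h_0(1,t)$ and hence $\lambda_0(1)=2$, giving $\lambda_0(2)>2$ strictly.

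**The main obstacle.** The delicate step is the sign analysis: $\cos\varphi(t)\sin\theta(t)$ is certainly \emph{not} one-signed, since Proposition~\ref{zeroam} tells us $\sin\theta(t)$ has $2p\geqslant 2$ zeros on $[0,t_0)$. So the naive argument fails, and this reflects the truth: $\lambda=2$ for $l=1$ is \emph{not} the ground state eigenvalue $\lambda_0(1)$ but a higher one. Hence I cannot conclude $\lambda_0(1)=2$; rather $\lambda_0(1)<2$, and monotonicity alone gives only $\lambda_0(2)>\lambda_0(1)$, which is too weak. The correct approach must be a direct variational lower bound for $\lambda_0(2)$: writing $c=\cos b=\min_t\cos^2\varphi$ is impossible, but using \eqref{phi}--\eqref{theta} one has the identity $4\pi^2\cos^2\varphi\,\dot\varphi^2\leqslant 1$ and the geodesic constraint $4\pi^2\cos^2\varphi(\dot\varphi^2+\dot\theta^2\cos^2\varphi)=1$. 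For any periodic $v$,
$$
R[v]=\frac{\int_0^{t_0}\bigl(4\pi^2\cos^2\varphi\,\dot v^2+\tfrac{4}{\cos^2\varphi}v^2\bigr)\,dt}{\int_0^{t_0}v^2\,dt}\geqslant \frac{4\int_0^{t_0}\frac{1}{\cos^2\varphi}v^2\,dt}{\int_0^{t_0}v^2\,dt}\geqslant 4>2,
$$
using $\cos^2\varphi\leqslant 1$ (with equality only at $\varphi=0$, excluded since $b<0$ so $\varphi$ ranges in $[b,-b]$ with $\cos^2\varphi<1$ off the two endpoints, but in fact $\varphi$ can reach $0$... no: $\min_t\varphi=b<0$ and by symmetry $\max_t\varphi=-b>0$, so $\varphi=0$ \emph{is} attained). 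The strict inequality $R[v]>4$ needs only that $\frac1{\cos^2\varphi}>1$ on a set of positive measure, which is clear. Thus $\lambda_0(2)\geqslant 4>2$, and this is the clean route; the main thing to verify carefully is just that the boundary/periodicity conditions make the first ("$\dot v^2$") term genuinely nonnegative and that $\cos^2\varphi\leqslant 1$ with the integral comparison being legitimate, which it is.
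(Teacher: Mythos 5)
Your final argument is exactly the paper's proof: by the variational principle $\lambda_0(2)=\inf_v R[v]$, and for every admissible $v$ one drops the nonnegative $4\pi^2\cos^2\varphi\,\dot v^2$ term and uses $\frac{4}{\cos^2\varphi}\geqslant 4$ to get $R[v]\geqslant 4>2$. The detour through $\lambda_0(1)$ and the one-signedness of $\cos\varphi\sin\theta$ is correctly identified by you as a dead end and can simply be deleted; the closing paragraph stands on its own.
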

\begin{proof} Let us use the variational principle for the problem (\ref{sturm}) with $l=2$,
$$
\lambda_0(2) = \inf_v \dfrac{\displaystyle\int\limits_0^{t_0}\left( 4\pi^2\cos^2\varphi(t)\dot v^2 + \cfrac{4}{\cos^2\varphi(t)}  \right)v^2dt}{\displaystyle\int\limits_0^{t_0}v^2 dt}
\geqslant \inf_v \frac{\displaystyle\int\limits_0^{t_0}\cfrac{4}{\cos^2\varphi(t)}v^2 dt}{\displaystyle\int\limits_0^{t_0}v^2 dt} \geqslant 4 > 2.
$$
\end{proof}

By Theorem \ref{th2}, the functions (\ref{bipolarimmersion2}) are eigenfunctions of the Laplace-Beltrami operator on the $\tilde O_{\frac{p}{q}}$.
It follows from formulae (\ref{bipolarimmersion2}) that the functions $\cos\varphi(t)\sin\theta(t)$ and $\cos\varphi(t)\cos\theta(t)$
are eigenfunctions of the problem (\ref{sturm}) with $l=1$. Proposition \ref{zeroam} implies that both functions have 
exactly $2p$ zeros. Hence, one can set $h_{2p-1}(1,t) = \cos\varphi(t)\sin\theta(t)$ and $h_{2p}(1,t) = \cos\varphi(t)\cos\theta(t)$. 
In the same way $\sin\varphi(t)$ is an eigenfunction of the problem (\ref{sturm}) with $l = 0$ and $\sin\varphi(t)$ has exactly 
$2q$ zeros. Hence, either $h_{2q-1}(0,t) = \sin\varphi(t)$ or $h_{2q}(0,t) = \sin\varphi(t)$. 

It turns out that the most difficult part of this paper is to prove that $h_{2q}(0,t) = \sin\varphi(t)$.
\subsection{Periods of eigenfunctions.}
Suppose that the coefficients $p(t),q(t)$ have a period less than $t_0$.
We are interested in the eigenfunctions with the same period.
\begin{prop} Let $h_i(t)$ be the eigenfuctions of the periodic Sturm-Liuville problem (\ref{Rayleigh},\ref{Periodic})
 with $\dfrac{t_0}{2n}$-periodic 
coefficients enumerated as in Proposition~\ref{SturmProp}. 
Then the $\dfrac{t_0}{2n}$-antiperiodic solutions of the problem (\ref{Rayleigh}) are\label{period}
$h_{2n(2k+1)-1}(t)$ and $h_{2n(2k+1)}(t)$, where $k\in \mathbb{Z}$. 
\end{prop}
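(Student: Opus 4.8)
The plan is to reduce everything to a scaling argument on the periodic Sturm--Liouville problem. Write $T = t_0/(2n)$ for the period of the coefficients. The key observation is that a function $h$ on $\mathbb{R}/t_0\mathbb{Z}$ which is $T$-antiperiodic, i.e.\ $h(t+T) \equiv -h(t)$, is automatically $t_0$-periodic (since $t_0 = 2nT$ and $(-1)^{2n} = 1$), hence is a legitimate candidate eigenfunction for the $t_0$-periodic problem (\ref{Rayleigh},\ref{Periodic}). Conversely, the $T$-antiperiodic eigenfunctions of (\ref{Rayleigh}) on the whole line form, together with the $T$-periodic ones, the full list of Bloch solutions that descend to $\mathbb{R}/t_0\mathbb{Z}$, so every eigenfunction $h_i$ of the $t_0$-periodic problem is either $T$-periodic or $T$-antiperiodic. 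It therefore suffices to count, among $h_0, h_1, h_2, \ldots$, which indices $i$ correspond to $T$-antiperiodic eigenfunctions, and to show this set is exactly $\{\,2n(2k+1)-1,\ 2n(2k+1)\ :\ k \in \mathbb{Z}_{\geqslant 0}\,\}$.

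The tool for the count is the zero-counting part of Proposition~\ref{SturmProp}, applied twice: once to the problem on the interval of length $t_0$ and once, via rescaling $t \mapsto t/(2n)$, to the problem on an interval of length $T$ whose coefficients are $p(2n\,\cdot\,), q(2n\,\cdot\,)$. First I would note that a $T$-antiperiodic eigenfunction of (\ref{Rayleigh}) on $\mathbb{R}$, restricted to one period cell of length $T$, is precisely an antiperiodic eigenfunction of the rescaled problem; by Proposition~\ref{SturmProp} the $j$-th such eigenfunction $\tilde h_j$ (for $j = 1,2,\ldots$) has exactly $2\lceil j/2\rceil - 1$ zeros on a cell of length $T$, i.e.\ an odd number $2m+1$ of zeros where $j \in \{2m+1, 2m+2\}$. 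Extending $T$-antiperiodically over the full interval of length $t_0 = 2nT$ multiplies the zero count by $2n$, giving $2n(2m+1)$ zeros on $[0,t_0)$ — an even number. Now I compare with the $t_0$-periodic enumeration: again by Proposition~\ref{SturmProp}, an eigenfunction of (\ref{Rayleigh},\ref{Periodic}) with index $i \geqslant 1$ has exactly $2\lceil i/2 \rceil$ zeros on $[0,t_0)$, and $h_0$ has none. So a $T$-antiperiodic $h_i$ with $2n(2m+1)$ zeros must satisfy $2\lceil i/2\rceil = 2n(2m+1)$, i.e.\ $\lceil i/2\rceil = n(2m+1)$, which forces $i \in \{2n(2m+1)-1,\ 2n(2m+1)\}$. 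Setting $k = m$ gives the claimed set.

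To finish I would argue that this accounting is a bijection, not merely an inclusion. The $T$-antiperiodic solutions that descend to $\mathbb{R}/t_0\mathbb{Z}$ form, for each $m \geqslant 0$, a one- or two-dimensional space matching the multiplicity pattern of $\tilde\lambda_{2m+1} \leqslant \tilde\lambda_{2m+2}$ of the rescaled problem (Proposition~\ref{SturmProp} again), and the corresponding pair of indices in the $t_0$-periodic list is exactly $\{2n(2m+1)-1, 2n(2m+1)\}$, with matching multiplicity. Summing over $m$ exhausts all indices of the form $2n(2k+1)-1$ and $2n(2k+1)$ and no others, so the $T$-antiperiodic eigenfunctions are precisely $h_{2n(2k+1)-1}$ and $h_{2n(2k+1)}$, $k \in \mathbb{Z}_{\geqslant 0}$.

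The main obstacle I anticipate is making the correspondence between Bloch solutions on $\mathbb{R}$ and eigenfunctions on $\mathbb{R}/t_0\mathbb{Z}$ fully rigorous in the degenerate cases — precisely, ensuring that when an eigenvalue of the $t_0$-periodic problem has multiplicity two, both members of the eigenpair have the \emph{same} period/antiperiod behaviour with respect to $T$ (so that an eigenvalue is never simultaneously realized by a $T$-periodic and a $T$-antiperiodic eigenfunction), and conversely that the multiplicities on the two scales are compatible. This is where the strict interlacing inequalities of Proposition~\ref{SturmProp}, applied to the rescaled problem, do the essential work: they guarantee that $T$-periodic and $T$-antiperiodic eigenvalues never coincide, so the two families of eigenfunctions of the $t_0$-periodic problem are cleanly separated by the zero count, and the index bookkeeping above is forced.
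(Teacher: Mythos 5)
Your argument is essentially the paper's: set up the $T$-antiperiodic problem on one period cell ($T=t_0/(2n)$), use the zero counts of Proposition~\ref{SturmProp} for its eigenfunctions, extend over $[0,t_0)$ to get $2n(2m+1)$ zeros, and match against the zero counts $2\lceil i/2\rceil$ of the $t_0$-periodic eigenfunctions; your final multiplicity bookkeeping is exactly what the paper does implicitly when it says there are only two $t_0$-periodic solutions with that number of zeros. One caveat: your intermediate assertion that every eigenfunction of the $t_0$-periodic problem is either $T$-periodic or $T$-antiperiodic is false for $n\geqslant 2$ (the Floquet multiplier over a $T$-cell need only be a $2n$-th root of unity; e.g.\ for constant coefficients $\sin t$ on $[0,2\pi)$ with $T=\pi/2$ is neither), but you never actually use this claim --- the zero count alone pins down the indices --- so the proof stands.
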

\begin{proof} Let us consider the following Sturm-Liouville problem,
\begin{equation*}
\begin{split}
& -\frac{d}{dt}\left(p(t)\frac{dh(t)}{dt}\right) + q(t)h(t)=\lambda h(t),\\
& h(t)\equiv -h\left(t+\frac{t_0}{2n}\right).
\end{split}
\end{equation*}

By Proposition~\ref{SturmProp}, its eigenvalues $\tilde\lambda_i$ form a sequence  
$$
\tilde\lambda_1\leqslant\tilde\lambda_2<\tilde\lambda_3\leqslant\tilde\lambda_4<\ldots
$$

Since $\dfrac{t_0}{2n}$-antiperiodic solutions are also $t_0$-periodic, the corresponding eigenfunctions $\tilde h_i(t)$ are 
solutions of the problem (\ref{Rayleigh},\ref{Periodic}). The eigenfunctions $\tilde h_{2i-1}(t)$ and $\tilde h_{2i}(t)$ have exactly $2i-1$ zeros on the interval
$\left[0,\dfrac{t_0}{2n}\right)$. Hence, they have $2n(2i+1)$ zeros on the interval $[0,t_0)$. There are only two solutions of
 the equation (\ref{Rayleigh},\ref{Periodic}) possesing this quantity of zeroes, therefore $\tilde h_{2i-1}(t)\equiv h_{2n(2i-1)-1}(t)$ and 
$\tilde h_{2i}(t) \equiv h_{2n(2i-1)}$.
\end{proof}

The following proposition can be proved in the same way.
\begin{prop} Let $h_i(t)$ be the eigenfuctions of the periodic Sturm-Liouville problem (\ref{Rayleigh},\ref{Periodic}) 
with $\dfrac{t_0}{n}$-periodic coefficients enumerated as in Proposition~\ref{SturmProp}. Then $\dfrac{t_0}{n}$-periodic
 solutions of the problem (\ref{Rayleigh}) are\label{period2} $h_0$, $h_{2nk-1}$ and $h_{2nk}$, where $k\in \mathbb{Z}$. 
\end{prop}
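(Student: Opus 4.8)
The plan is to repeat verbatim the argument of Proposition~\ref{period}, replacing the auxiliary $\frac{t_0}{2n}$-antiperiodic problem by the auxiliary $\frac{t_0}{n}$-periodic one. First I would consider the Sturm--Liouville equation (\ref{Rayleigh}) on the interval $\left[0,\frac{t_0}{n}\right)$ together with the boundary condition $h(t)\equiv h\!\left(t+\frac{t_0}{n}\right)$. Since the coefficients $p(t),q(t)$ are $\frac{t_0}{n}$-periodic, this is again a periodic Sturm--Liouville problem of the form covered by Proposition~\ref{SturmProp}, so it has eigenfunctions $\hat h_0,\hat h_1,\hat h_2,\ldots$ enumerated as there: $\hat h_0$ has no zeros on $\left[0,\frac{t_0}{n}\right)$, while $\hat h_{2i-1}$ and $\hat h_{2i}$ each have exactly $2i$ zeros on that interval.

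Next, every $\frac{t_0}{n}$-periodic function is in particular $t_0$-periodic, so each $\hat h_i$ is an eigenfunction of the original problem (\ref{Rayleigh},\ref{Periodic}). Counting zeros over the full period, $\hat h_0$ still has no zeros on $[0,t_0)$, and $\hat h_{2i-1}$, $\hat h_{2i}$ each acquire $2ni$ zeros on $[0,t_0)$. By the zero-counting statement of Proposition~\ref{SturmProp}, the only $t_0$-periodic eigenfunction with no zeros is $h_0$, and for each $k\geqslant 1$ the only ones with exactly $2nk$ zeros are $h_{2nk-1}$ and $h_{2nk}$. Hence $\hat h_0\equiv h_0$, $\hat h_{2k-1}\equiv h_{2nk-1}$ and $\hat h_{2k}\equiv h_{2nk}$; conversely each of $h_0$, $h_{2nk-1}$, $h_{2nk}$ is $\frac{t_0}{n}$-periodic, being one of the $\hat h_i$, which gives the claimed list.

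The argument is essentially free of obstacles; the only point requiring care is the bookkeeping in Proposition~\ref{SturmProp} --- namely that the pair $h_{2i-1},h_{2i}$ carries $2i$ zeros (not $2i-2$), and that a genuinely $\frac{t_0}{n}$-periodic function has exactly $n$ times as many zeros on $[0,t_0)$ as on $\left[0,\frac{t_0}{n}\right)$, with no zero created or lost at the gluing points. With this in hand the enumeration matches and the proof closes exactly as for Proposition~\ref{period}.
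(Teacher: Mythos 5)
Your argument is correct and is exactly what the paper intends: it states that Proposition~\ref{period2} "can be proved in the same way" as Proposition~\ref{period}, and your proof carries out that adaptation, correctly using the zero counts ($2i$ zeros for the pair $\hat h_{2i-1},\hat h_{2i}$ on the short period, hence $2ni$ on $[0,t_0)$) to match the auxiliary $\frac{t_0}{n}$-periodic eigenfunctions with $h_0$, $h_{2nk-1}$, $h_{2nk}$.
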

\subsection{Estimates for $\lambda_{2q-1}(0)$.} The main goal of this section is to prove that the inequality \label{lambda2q}$\lambda_{2q-1}(0)<2$ holds. 
Due to Proposition~\ref{period}, $\lambda_{2q-1}(0)$ is equal to the first eigenvalue of the following problem,
\begin{equation*}
\begin{split}
 -\frac{d}{dt}&\left(4\pi^2\cos^2\varphi(t)\frac{dh(t)}{dt}\right) = \lambda h(t),\\
 h&\left(t+\frac{t_0}{2q}\right)\equiv -h(t).
\end{split}
\end{equation*}

The application of Corollary~\ref{minmax} with $f(t) = \sin \dfrac{2q\pi t}{t_0}$ yields 
\begin{equation*}
\begin{split}
&\lambda_{2q-1}(0)< R[f] =\\&
\frac{16q^2\pi^4\displaystyle\int\limits_0^{\frac{t_0}{2q}}\cos^2\varphi(t)\cos^2\frac{2q\pi t}{t_0}\,dt}{t_0^2
\displaystyle\int\limits_0^{\frac{t_0}{2q}}\sin^2\frac{2q\pi t}{t_0}\,dt} = \frac{32\pi^4q^3}{t_0^3}\int\limits_0^{\frac{t_0}{2q}}\cos^2\varphi(t)
\left(1+\cos\frac{4q\pi t}{t_0}\right)dt.
\end{split}
\end{equation*}

According to Proposition~\ref{zeroam}, the integrand has a symmetry of the form $t\mapsto\dfrac{t_0}{2q}-t$ and 
$\cos\varphi(t)$ is increasing on $\left(0,\dfrac{t_0}{4q}\right)$. Hence, one obtains
\begin{equation*}
\begin{split}
&\int\limits_0^{\frac{t_0}{2q}}\cos^2\varphi(t)\cos\frac{4q\pi t}{t_0}\,dt = \\
&2\int\limits_0^{\frac{t_0}{4q}}\cos^2\varphi(t)
\cos\frac{4q\pi t}{t_0}\,dt < 2\cos^2\varphi\left(\frac{t_0}{8q}\right)\int\limits_0^{\frac{t_0}{4q}}
\cos\frac{4q\pi t}{t_0}\,dt = 0.
\end{split}
\end{equation*}
Therefore, it is sufficient to prove that 
$$
\frac{32\pi^4q^3}{t_0^3}\int\limits_0^{\frac{t_0}{2q}}\cos^2\varphi(t) dt < 2.
$$

It follows from formula~(\ref{phi}) and Proposition~\ref{zeroam} that $\varphi(t)$ is a smooth monotonous function on 
$\left(0,\dfrac{t_0}{2q}\right)$. The obvious equality $t_0 = 2q\int\limits_0^{\frac{t_0}{2q}}\, dt$ holds. 
Thus, using the change of variables $t \to \varphi(t)$ one obtains
$$
\frac{32\pi^4q^3}{t_0^3}\int\limits_0^{\frac{t_0}{2q}}\cos^2\varphi(t) dt = \pi^2\frac{\displaystyle\int\limits_{-b}^b\frac{\cos^5\varphi}
{\sqrt{\cos^4\varphi - \cos^4 b}}\,d\varphi}{\left(\displaystyle\int\limits_{-b}^b\frac{\cos^3\varphi}{\sqrt{\cos^4\varphi - \cos^4 b}}\,d\varphi\right)^3}.
$$
Hence, the question is reduced to estimating this ratio of integrals. Let us denote the numerator by $I_1(b)$ and
the denominator by $I_2(b)$, where $b\in \left[0,\dfrac{\pi}{2}\right]$. We use notations $K\,,E$ 
and $\Pi$ for the complete elliptic integrals of first, second and third kind respectively (see e.g. the book~\cite{Friedman}),
\begin{equation*}
\begin{split}
&K(k) = \int\limits_0^1\frac{1}{\sqrt{1-x^2}\sqrt{1-k^2x^2}}\,dx,\qquad
E(k) = \int\limits_0^1\frac{\sqrt{1-k^2x^2}}{\sqrt{1-x^2}}\,dx, \\
&\Pi(n,k) = \int\limits_0^1\frac{1}{(1-nx^2)\sqrt{1-x^2}\sqrt{1-k^2x^2}}\,dx. 
\end{split}
\end{equation*}
\begin{prop} The function $\dfrac{I_1(b)}{I_2^3(b)}$ is\label{lambda2p} decreasing on $\left(0,\dfrac{\pi}{2}\right)$.
\end{prop}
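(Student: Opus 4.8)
The plan is to evaluate $I_1(b)$ and $I_2(b)$ explicitly in terms of complete elliptic integrals and then to compute the logarithmic derivative of the resulting ratio. First I would substitute $s=\sin\varphi$; this turns the radicand $\cos^4\varphi-\cos^4 b$ into $(m-s^2)(2-m-s^2)$ with $m=\sin^2 b$, and it absorbs the awkward $d\varphi$ since $\cos^j\varphi\,d\varphi=\cos^{j-1}\varphi\,ds$. Rescaling by $s=\sqrt m\,x$ and setting $k^2=\frac{m}{2-m}$ brings both integrals to the standard form $\int_0^1 x^{2n}\,[(1-x^2)(1-k^2x^2)]^{-1/2}\,dx$. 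Using the classical reduction of these moments to $K(k)$ and $E(k)$ (namely $c_0=K$, $c_1=(K-E)/k^2$, $c_2=[(2+k^2)K-2(1+k^2)E]/(3k^4)$, together with $m/k^2=2-m$), I expect to obtain
\begin{gather*}
I_2(b)=\frac{2}{\sqrt{2-m}}\bigl[(m-1)K+(2-m)E\bigr],\\
I_1(b)=\frac{2}{3\sqrt{2-m}}\bigl[(m^2-1)K+2(2-m)E\bigr].
\end{gather*}

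Next I would form the ratio $I_1/I_2^3$, in which the powers of $\sqrt{2-m}$ combine into a single factor $(2-m)/12$, and re-express everything through $k$ by means of $2-m=2/(1+k^2)$, $m-1=(k^2-1)/(1+k^2)$, $m^2-1=(k^2-1)(3k^2+1)/(1+k^2)^2$. This should collapse the claim to the statement that $N(k)/D(k)^3$ is decreasing on $(0,1)$, where $D=2E-(1-k^2)K$ and $N=4(1+k^2)E-(3k^2+1)(1-k^2)K$; since $k$ is an increasing function of $b$ on $(0,\pi/2)$, monotonicity in $k$ is exactly what is wanted. One checks first that $D>0$ and $N>0$ on $(0,1)$ using the elementary inequality $E(k)>(1-k^2)K(k)$.

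Then I would differentiate, using $\frac{dK}{dk}=\frac{E-(1-k^2)K}{k(1-k^2)}$ and $\frac{dE}{dk}=\frac{E-K}{k}$, so that the sign of $(N/D^3)'$ is the sign of $N'D-3ND'$. I anticipate — and this is the heart of the computation — that the derivatives simplify dramatically: $D'=\frac{E-(1-k^2)K}{k}$ and $N'=3(1+3k^2)D'$, whence $N'D-3ND'=3D'\bigl[(1+3k^2)D-N\bigr]$, and moreover $(1+3k^2)D-N=-2(1-k^2)E$. Since $D'>0$ on $(0,1)$ (again by $E>(1-k^2)K$), this yields $N'D-3ND'=-6(1-k^2)E\,D'<0$, which is precisely the assertion.

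The main obstacle is purely computational: carrying the two differentiations through without error and recognizing the cancellations producing the identities $N'=3(1+3k^2)D'$ and $(1+3k^2)D-N=-2(1-k^2)E$. Should these not simplify so cleanly, the fallback is to bound the relevant combination of $K$ and $E$ directly via standard estimates such as $\tfrac{\pi}{2}\le K$, $E\le\tfrac{\pi}{2}$ and $(1-k^2)K<E<K$; but I expect the algebraic identities above to render such estimates unnecessary.
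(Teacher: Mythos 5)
Your proposal is correct and follows essentially the same route as the paper: both reduce $I_1$ and $I_2$ to combinations of $K(k)$ and $E(k)$ with $k^2=\sin^2 b/(1+\cos^2 b)$ (your moment recursion for $c_2$ is the same device as the paper's integration-by-parts identity), then differentiate with the classical formulas and show the combination $I_1'I_2-3I_1I_2'$ is a negative multiple of $E\bigl(E-(1-k^2)K\bigr)$. All the identities you anticipated — $D'=\bigl(E-(1-k^2)K\bigr)/k$, $N'=3(1+3k^2)D'$, and $(1+3k^2)D-N=-2(1-k^2)E$ — do check out, so no fallback estimates are needed.
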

\begin{proof}
One has
\begin{equation*}
\begin{split}
I_2(b) = 
\int\limits_{-\sin b}^{\sin b}\frac{1-y^2}{\sqrt{(1-y^2)^2-\cos^4b}}dy = 
\int\limits_{-1}^1\frac{1-x^2\sin^2b}{\sqrt{1-x^2}\sqrt{1+\cos^2b - x^2\sin^2b}} dx. 
\end{split}
\end{equation*}
Here the following changes of variables were used, $\sin\varphi = y$, $y=x\sin b$. In the same way
$$
I_1(b) = \int\limits_{-1}^1\frac{(1-x^2\sin^2b)^2}{\sqrt{1-x^2}\sqrt{1+\cos^2b - x^2\sin^2b}}dx.
$$
Let us remark that 
\begin{equation*}
\begin{split}
\frac{d(x\sqrt{1-x^2}\sqrt{1+\cos^2b - x^2\sin^2b})}{dx} =
\frac{3x^4\sin^2b - 4x^2+1+\cos^2b}{\sqrt{1-x^2}\sqrt{1+\cos^2b - x^2\sin^2b}}.
\end{split}
\end{equation*}
Integrating over the interval $[-1,1]$, one obtains the following equality,
$$
\frac{1}{3}\int\limits_{-1}^{1}\frac{3x^4\sin^2b - 4x^2+1+\cos^2b}{\sqrt{1-x^2}\sqrt{1+\cos^2b - x^2\sin^2b}}\,dx = 0.
$$
One can subtract this formula from the $I_1(b)$. Hence, the following equality holds,
$$
I_1(b) = \frac{2}{3}\int\limits_{-1}^1 \frac{(3-\sin^2b-\sin^2b\cos^2b) - 2x^2\sin^2b}{2\sqrt{1-x^2}\sqrt{1+\cos^2b - x^2\sin^2b}}.
$$

Let us introduce the notation $k^2 = \dfrac{\sin^2b}{1+\cos^2b}$. Then, it follows, that
\begin{equation*}
\begin{split}
&I_1(b) = \frac{4}{3}\sqrt{\frac{2}{1+k^2}}\left(E(k) - \frac{(1-k^2)(1+3k^2)}{4(1+k^2)}K(k)\right)\\
&I_2(b) = 2\sqrt{\frac{2}{1+k^2}}\left(E(k) - \frac{1-k^2}{2}K(k)\right).
\end{split} 
\end{equation*}
Since $k(b)$ is an increasing function, it is sufficient to prove that $\frac{I_1}{I_2^3}$ is a decreasing function of $k$. 
Using classical formulae for the derivatives of the elliptic integrals
\begin{equation}
\label{dedk}
\frac{dE(k)}{dk} = \frac{E(k)-K(k)}{k}\,,\qquad \frac{dK(k)}{dk} = \frac{E(k)}{k(1-k^2)} - \frac{K(k)}{k},
\end{equation}
one gets
\begin{equation}
\label{dI2}
\frac{dI_2(k)}{dk} = 2\sqrt{\frac{2}{(1+k^2)^3}}\frac{1-k^2}{2k}(E(k) - K(k))
\end{equation}
and
$$
\frac{dI_1(k)}{dk} = 2\sqrt{\frac{2}{(1+k^2)^3}}\frac{1-k^2}{2k}\left(E(k) - \frac{1+3k^2}{1+k^2}K(k)\right).
$$

Since 
$$
\left(\frac{I_1(k)}{I_2^3(k)}\right)' = \frac{I_1'(k)I_2(k) - 3I_1(k)I_2'(k)}{I_2^4(k)},
$$
it is sufficient to prove that $I_1'(k)I_2(k) - 3I_1(k)I_2'(k)<0$. Using two previous formulae one has
\begin{equation}
\label{formula}
I_1'(k)I_2(k) - 3I_1(k)I_2'(k) = \frac{2(1-k^2)}{k(1+k^2)^2}E(k)\left((1-k^2)K(k) - E(k)\right).
\end{equation}

It is well-known that $K(k)$ is an increasing function. The equality~(\ref{dedk}) implies that $k(1-k^2)\dfrac{dK(k)}{dk} = E(k) - (1-k^2)K(k)>0$.
Hence, the last factor in formula~(\ref{formula}) is negative. Therefore, the right hand side of formula~(\ref{formula}) is negative.
\end{proof}
\begin{corollary} The following inequality holds, $I_2(a)<\dfrac{\pi}{\sqrt{2}}$\label{I2}.
\end{corollary}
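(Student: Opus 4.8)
The plan is to read the bound off from formula~(\ref{dI2}) for $dI_2/dk$, which is already available from the proof of Proposition~\ref{lambda2p}. The first step is to show that $I_2$ is a strictly decreasing function of $k$ on $(0,1)$. In formula~(\ref{dI2}) the coefficient $2\sqrt{2/(1+k^2)^3}\cdot(1-k^2)/(2k)$ is strictly positive for $k\in(0,1)$, so it is enough to know that $E(k)-K(k)<0$ there. This is the elementary comparison between the complete elliptic integrals of the first and second kind: on $(0,1)$ one has $\sqrt{1-k^2x^2}<(1-k^2x^2)^{-1/2}$ pointwise, so the integrand defining $E(k)$ is strictly dominated by the one defining $K(k)$. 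Hence $dI_2/dk<0$.

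Since $k=k(b)=\sin b/\sqrt{1+\cos^2 b}$ is increasing (as was noted in the proof of Proposition~\ref{lambda2p}), and in the application the argument $b=b(a)$ is itself increasing in $a$ by~(\ref{b(a)}), the function $I_2$ is strictly decreasing along the relevant one-parameter family, so its supremum over the relevant open range is the limiting value as $k\to 0+$. Using the closed form $I_2=2\sqrt{2/(1+k^2)}\bigl(E(k)-\tfrac{1-k^2}{2}K(k)\bigr)$ obtained in that same proof together with $E(0)=K(0)=\pi/2$, this limit equals $2\sqrt2\,\bigl(\pi/2-\pi/4\bigr)=\pi/\sqrt2$. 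Combining the two steps gives $I_2(a)<\pi/\sqrt2$.

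I do not expect a genuine obstacle: the derivative formula does all the work, and the only points needing a moment of care are the sign of $E(k)-K(k)$ and the identification of the endpoint $b\to0$ (equivalently $k\to0$) with the value $\pi/\sqrt2$. One should also note that the estimate is sharp in that limit, so the strict inequality genuinely relies on $a$ --- hence $b$, hence $k$ --- being bounded away from the endpoint; it is not a uniform gap.
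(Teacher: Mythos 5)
Your proof is correct and follows exactly the paper's argument: deduce that $I_2$ is decreasing from formula~(\ref{dI2}) together with $E(k)<K(k)$, then identify the supremum with the value at $b=0$ (equivalently $k=0$), which equals $\pi/\sqrt{2}$. The only cosmetic difference is that you evaluate the endpoint from the closed form in elliptic integrals via $E(0)=K(0)=\pi/2$, whereas the paper computes $I_2(0)=\frac{1}{\sqrt{2}}\int_{-1}^{1}\frac{dx}{\sqrt{1-x^2}}$ directly.
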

\begin{proof} It is well-known that $E(k)<K(k)$. Therefore, according to formula (\ref{dI2}), the function $I_2(b)$ is decreasing.
Thus, one obtains the inequality $I_2(b)<I_2(0) = \dfrac{1}{\sqrt{2}}\displaystyle\int\limits_{-1}^1\dfrac{dx}{\sqrt{1-x^2}} = \dfrac{\pi}{\sqrt{2}}$.
\end{proof}
\begin{prop} For $b\in (0,\frac{\pi}{2})$ the following inequality holds,
$$
\pi^2\frac{I_1(b)}{I_2(b)^3}<2.
$$
\end{prop}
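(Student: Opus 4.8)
The plan is to reduce the inequality to a single boundary evaluation by exploiting the monotonicity already established. By Proposition~\ref{lambda2p} the function $b\mapsto I_1(b)/I_2^3(b)$ is strictly decreasing on $\left(0,\dfrac{\pi}{2}\right)$, so for every $b$ in this interval one has
$$
\frac{I_1(b)}{I_2^3(b)} < \lim_{b\to 0+}\frac{I_1(b)}{I_2^3(b)}.
$$
Hence it suffices to compute this limit and check that it equals $\dfrac{2}{\pi^2}$.

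To evaluate the limit I would use the closed forms for $I_1$ and $I_2$ in terms of complete elliptic integrals obtained in the proof of Proposition~\ref{lambda2p}, namely
$$
I_1(b) = \frac{4}{3}\sqrt{\frac{2}{1+k^2}}\left(E(k) - \frac{(1-k^2)(1+3k^2)}{4(1+k^2)}K(k)\right),\qquad
I_2(b) = 2\sqrt{\frac{2}{1+k^2}}\left(E(k) - \frac{1-k^2}{2}K(k)\right),
$$
with $k^2 = \dfrac{\sin^2 b}{1+\cos^2 b}$. As $b\to 0+$ one has $k\to 0$ and $E(0)=K(0)=\dfrac{\pi}{2}$, which gives $I_1(0)=\dfrac{4}{3}\sqrt2\cdot\dfrac{3}{4}\cdot\dfrac{\pi}{2}=\dfrac{\pi}{\sqrt2}$ and $I_2(0)=2\sqrt2\cdot\dfrac{\pi}{4}=\dfrac{\pi}{\sqrt2}$; the latter value already appears in the proof of Corollary~\ref{I2}. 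Therefore
$$
\lim_{b\to 0+}\pi^2\,\frac{I_1(b)}{I_2^3(b)} = \pi^2\cdot\frac{\pi/\sqrt2}{(\pi/\sqrt2)^3} = \pi^2\cdot\frac{2}{\pi^2} = 2,
$$
and combining this with the strict monotonicity yields $\pi^2 I_1(b)/I_2^3(b) < 2$ for all $b\in\left(0,\dfrac{\pi}{2}\right)$, as required.

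There is no serious obstacle here: the argument is essentially a limiting computation plus a reference to Proposition~\ref{lambda2p}. The only point deserving care is the \emph{strictness} of the inequality in Proposition~\ref{lambda2p} (so that the supremum at the endpoint is not attained), which is guaranteed because formula~(\ref{formula}) shows the relevant derivative is strictly negative on the open interval. It is worth noting that the bound is sharp in the limit $b\to 0+$, which is consistent with the fact that $\lambda=2$ is exactly the critical eigenvalue appearing in Theorem~\ref{th1}; this tightness is why the preceding reductions had to be carried out with the precise constants rather than crude estimates.
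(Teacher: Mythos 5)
Your proposal is correct and follows the paper's own argument exactly: the paper also deduces the inequality from the monotonicity in Proposition~\ref{lambda2p} together with the boundary value $\pi^2 I_1(0)/I_2^3(0)=2$. Your version merely spells out the limit computation $I_1(0)=I_2(0)=\pi/\sqrt{2}$ that the paper leaves implicit.
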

\begin{proof} The statement follows from Proposition \ref{lambda2p} and the equality $\pi^2\dfrac{I_1(0)}{I_2^3(0)} = 2$.
\end{proof}

This completes the proof of the inequality $\lambda_{2q-1}(0)<2$.

\subsection{Proof of the theorem.} It follows from Theorem \ref{th1} that in order to prove Theorem~\ref{MainTheorem} it is 
sufficient to prove that $N(2) = 2q+4p-2$ if $q$ is odd and $N(2)=q+2p-2$ if $q$ is even. 
According to Proposition~\ref{lambda0(2)}, $\lambda_0(2)>2$. 
By Corollary \ref{monotonicity}, one has $\lambda_0(l)>2$ as $l\geqslant 2$. Then due to 
Proposition~\ref{SturmProp}, $\lambda_i(l)>2$, when $l\geqslant 2$ and $k\geqslant 0$. 
For $l=0$, $\lambda_{2q-1}(0)<\lambda_{2q}(0)=2$. Hence, we have $2q$ eigenfunctions of the 
problem~(\ref{sturm}) with $l=0$ with eigenvalues less than $2$. If $q$
 is even, then we need to take into account the invariance under the transformation $(\alpha,t)\mapsto\left(\alpha+\pi, t+\dfrac{t_0}{2}\right)$. The 
application of Proposition~\ref{period2} for $n=2$ leaves $q$ eigenfunctions. By Proposition~\ref{SturmProp} one has
$\lambda_{2k+1}>\lambda_{2k}$. Hence, for $l=1$ the following inequality holds, $\lambda_{2p-2}(1)<\lambda_{2p-1}(1)=\lambda_{2p-2}=2$. In the same 
way one obtains $2p-1$ eigenfunctions if $q$ is odd and $p-1$ eigenfunctions if $q$ is even (here one should apply Proposition~\ref{period}). According to Corollary~\ref
{basis}, any eigenfunction of the problem (\ref{sturm}) with $l\geqslant 1$ provides exactly two eigenfunctions of the 
Laplace-Beltrami operator on $\tilde O_{\frac{p}{q}}$. Thus, if $q$ is odd then one has $N(2) = 2q+2(2p-1) = 2q+4p-2$. If $q$ is
even then one has $N(2) = q+2(p-1) = q+2p-2$.
\subsection{The value of the corresponding functional.}
\begin{prop} If $q$ is odd then $\Lambda_{2q+4p-2}(\tilde O_{\frac{p}{q}}) = 8q\pi I_2(a) < 4\sqrt{2}q\pi$. 
If $q$ is even then $\Lambda_{q+2p-2}(\tilde O_{\frac{p}{q}}) = 4q\pi I_2(a) < 2\sqrt{2}q\pi$.\label{bipOtsuki}
\end{prop}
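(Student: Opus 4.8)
The value of the functional is $\Lambda_{N(2)}(\tilde O_{\frac pq}) = \lambda_{N(2)}(\tilde O_{\frac pq})\cdot\mathrm{Area}(\tilde O_{\frac pq})$, and the plan is to evaluate the two factors separately. For the eigenvalue factor: by Theorem~\ref{th2} the coordinate functions~(\ref{bipolarimmersion2}) are eigenfunctions of $\Delta$ with eigenvalue $\dim\tilde O_{\frac pq} = 2$, so $2$ belongs to the spectrum of $\Delta$; combined with the counting $N(2) = 2q+4p-2$ (respectively $q+2p-2$) obtained in the previous subsection, this forces $\lambda_{N(2)}(\tilde O_{\frac pq}) = 2$, since all eigenvalues of index $<N(2)$ are $<2$ while $\lambda_{N(2)}\geqslant 2$ by the definition of $N(2)$. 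Hence $\Lambda_{N(2)}(\tilde O_{\frac pq}) = 2\,\mathrm{Area}(\tilde O_{\frac pq})$ and the statement reduces to an area computation.

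For the area I would start from the explicit metric~(\ref{h}): since $h = \cos^2\varphi(t)\,d\alpha^2 + \dfrac{1}{4\pi^2\cos^2\varphi(t)}\,dt^2$, the area form equals $\sqrt{\det h}\,d\alpha\,dt = \dfrac{1}{2\pi}\,d\alpha\,dt$, which is constant. When $q$ is odd, Proposition~\ref{zeroam} shows that the parametrization $(\alpha,t)\in[0,2\pi)\times[0,t_0)$ is not invariant under any nontrivial transformation, hence generically one-to-one onto the image, so $\mathrm{Area}(\tilde O_{\frac pq}) = \dfrac{1}{2\pi}\cdot 2\pi\cdot t_0 = t_0$. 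When $q$ is even, Proposition~\ref{zeroam} shows that $J_b$ descends to the quotient of $[0,2\pi)\times[0,t_0)$ by the free involution $(\alpha,t)\mapsto\bigl(\alpha+\pi,\,t+\tfrac{t_0}{2}\bigr)$, so the fundamental domain must be halved and $\mathrm{Area}(\tilde O_{\frac pq}) = \dfrac{t_0}{2}$.

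It then remains to express the period $t_0$ through $I_2$. By Proposition~\ref{zeroam}, $\varphi(t)$ is monotone on $\left(0,\tfrac{t_0}{4q}\right)$ and runs between $\varphi(0) = b$ and $0$; changing the variable from $t$ to $\varphi$ by means of~(\ref{phi}) gives
$$
\frac{t_0}{4q} = \int\limits_0^{t_0/4q}dt = 2\pi\int\limits_0^{|b|}\frac{\cos^3\varphi}{\sqrt{\cos^4\varphi-\cos^4 b}}\,d\varphi = \pi I_2(a),
$$
so that $t_0 = 4\pi q\,I_2(a)$. (The same substitution, carried instead over $\int_0^{t_0/2q}\cos^2\varphi\,dt$ together with $t_0 = 2q\int_0^{t_0/2q}dt$, reproduces the identity of Section~\ref{lambda2q}, which serves as a consistency check.) Substituting, one obtains $\Lambda_{2q+4p-2}(\tilde O_{\frac pq}) = 2t_0 = 8q\pi I_2(a)$ for odd $q$ and $\Lambda_{q+2p-2}(\tilde O_{\frac pq}) = t_0 = 4q\pi I_2(a)$ for even $q$; the strict upper bounds follow at once from Corollary~\ref{I2}, namely $I_2(a)<\tfrac{\pi}{\sqrt 2}$. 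Unlike the earlier sections the last step involves no hard estimate; the only point that needs a little care is the factor $\tfrac12$ in the even case — one must check that the immersion genuinely factors through the quotient torus, so that integration is over a fundamental domain of half the area.
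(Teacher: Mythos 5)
Your argument is correct and follows the paper's own proof essentially verbatim: the paper likewise computes $\Lambda = 2\cdot\mathrm{Area} = 2t_0$ from $\sqrt{\det h} = \frac{1}{2\pi}$, halves this for even $q$ using the invariance from Proposition~\ref{zeroam}, and obtains $t_0 = 4q\pi I_2$ by the same change of variables $t\mapsto\varphi(t)$ as in Section~\ref{lambda2q}, finishing with Corollary~\ref{I2}. One small remark: substituting $I_2<\pi/\sqrt{2}$ actually yields $8q\pi I_2 < 4\sqrt{2}q\pi^2$ rather than $4\sqrt{2}q\pi$ — but this discrepancy is already present in the proposition as stated in the paper and is not introduced by your argument.
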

\begin{proof} By formula~(\ref{h}) for the metric $h$ on the torus $\tilde O_{\frac{p}{q}}$ one has $\sqrt{\det h} = \dfrac{1}{2\pi}$. 
Hence the corresponding value of the functional is equal to 
$$
2\int\limits_0^{t_0}\int\limits_0^{2\pi}\frac{1}{2\pi}\,d\varphi\,d\alpha = 2t_0.
$$
For even $q$ one need to take into account the invariance under the transformation $(\alpha,t)\mapsto\left(\alpha+\pi, t+\dfrac{t_0}{2}\right)$, so this value has to be divided by two. Arguing as in Section~\ref{lambda2q} one obtains that
$$
t_0 = 2q\int\limits_0^{\frac{t_0}{2q}}\,dt = 4q\pi I(b).
$$
The application of Corollary~\ref{I2} yields the desired inequalities.
\end{proof}
\subsection{Proof of Proposition~\ref{xi}.} Let us\label{dxi} consider $\Xi(b)$ as a function of $b\in\left(0,\dfrac{\pi}{2}\right)$.
One has to prove that $\Xi(b)$ is decreasing on this interval. Let us begin with expressing $\Xi(b)$ in terms of elliptic 
integrals,
\begin{equation*}
\begin{split}
&\cos^2b\int\limits_{-b}^b\frac{1}{\cos\varphi\sqrt{\cos^4\varphi-\cos^4b}}\,d\varphi = 
\cos^2b\int\limits_{-\sin b}^{\sin b}\frac{1}{(1-y^2)\sqrt{(1-x^2)^2-\cos^4b}}\,d\varphi \\ &= 
\cos^2b\int\limits_{-1}^{1}\frac{1}{(1-x^2\sin^2b)\sqrt{1-x^2}\sqrt{1+\cos^2b - x^2\sin^2b}} = \\
&2\frac{\cos^2b}{\sqrt{1+\cos^2b}}\,\Pi\left(\sin^2b,\frac{\sin b}{\sqrt{1+\cos^2b}}\right) = 
2\frac{1-n}{\sqrt{2-n}}\,\Pi\left(n,\sqrt{\frac{n}{2-n}}\right),
\end{split}
\end{equation*}
where $n=\sin^2b$. Here the following changes of variables were used, $\sin\varphi = y$, $y = x\sin b$. Now the equality $\lim\limits_{b\to 0}\Xi(b)=\dfrac{\sqrt{2}}{2}\pi$ follows from sustituting $n=0$ 
into this formula. Using the formulae for the derivatives of $\Pi(n,k)$,
\begin{equation*}
\begin{split}
&\frac{\partial\Pi(n,k)}{\partial n} = \frac{1}{2(k^2-n)(n-1)}\left(E(k) +\frac{1}{n}(k^2-n)K(k) + \frac{1}{n}(n^2-k^2)\Pi(n,k)\right), \\
&\frac{\partial\Pi(n,k)}{\partial k} = \frac{k}{n-k^2}\left(\frac{E(k)}{k^2-1} + \Pi(n,k)\right),
\end{split}
\end{equation*}
one obtains
$$
\frac{d\Xi(n)}{dn} = \frac{1}{2n\sqrt{2-n}}\left(E\left(\sqrt{\frac{n}{2-n}}\right) - K\left(\sqrt{\frac{n}{2-n}}\right)\right) <0.
$$

\subsection*{Acknowledgments}
The author thanks A.V. Penskoi for statement of this problem, fruitful discussions and invaluable help in the preparation
of the manuscript. 

\end{document}